\newtheorem{thm}{Theorem}[section]
    \theoremstyle{Definition}
    \theoremstyle{Definition and Remark}
    \newtheorem{defi-rem}[thm]{Definition and Remark}
    \newtheorem{defis-rem}[thm]{Definitions and Remark}
    \newtheorem{defis-rems}[thm]{Definitions and Remarks}
    \newtheorem{defi-Not}[thm]{Definition and Notation}
    \theoremstyle{Lemma}
    \newtheorem{lem}[thm]{Lemma}
    \theoremstyle{remark}
    \newtheorem{rem}[thm]{Remark}
    \theoremstyle{Corollary}
    \newtheorem{exam}[thm]{Example}
    \newtheorem{prop}[thm]{Proposition}
    \newtheorem{ques}[thm]{Question}
\DeclareMathOperator{\coker}{Coker}
 \DeclareMathOperator{\Ass}{Ass}
\DeclareMathOperator{\depth}{depth}
\newcommand{\et}{\text{e}} \newcommand{\Ht}{\text{H}}   
 \newcommand{\Kt}{\text{K}} \newcommand{\Et}{\text{E}}   
  \newcommand{\hit}{\text{ht}}  \newcommand{\As}{\text{Ass}}
  \newcommand{\dep}{{\rm depth}} \newcommand{\dime}{{\rm dim}}
\newcommand{\map}{\mathfrak{p}} \newcommand{\mam}{\mathfrak{m}} \newcommand{\maq}{\mathfrak{q}} \newcommand{\maa}{\mathfrak{a}}
\newcommand{\mab}{\mathfrak{b}}   \newcommand{\man}{\mathfrak{n}}
\newcommand{\homm}{\text{Hom}}
 \newcommand{\im}{\text{im}}
\newcommand{\dsum}{\bigoplus}
\newcommand{\ten}{\bigotimes}
 \newcommand{\tor}{\text{Tor}}
\newcommand{\ins}{\bigcap} 
 \newcommand{\Rc}{\widehat{R}}
 \newcommand{\Frac}{\text{Frac}} \newcommand{\Ra}{R(a^{1/2})}
\DeclareMathAlphabet{\mathcalligra}{T1}{calligra}{g}{f}
\newcommand{\llar}{-\kern-5pt-\kern-5pt\longrightarrow}
\def\restr{{\kern-1pt\restriction\kern-1pt}}
\title{Annihilators of Koszul Homologies and Almost Complete Intersections}
\author[Ehsan Tavanfar]{Ehsan Tavanfar}
\address{
	 School of Mathematics, Institute for Research in Fundamental Sciences (IPM), P. O. Box 19395-
	 5746, Tehran, Iran, and,\newline Department of Mathematics, Shahid Beheshti University, G.C., Tehran, Iran.}
 \email{tavanfar@ipm.ir and tavanfar@gmail.com}
\thanks{The author is supported  in part by a grant from IPM-Iran and in part by NSF grant  DMS 1162585. }
\date{December, 27, 2018}
\keywords{Almost complete intersection, approximation complex, canonical module, Koszul annihilator, multiplicity.}
\subjclass[2010]{13D02, 13H10, 13H15.}
\begin{document}
	
\maketitle
\begin{center}In the memory of my father, Manouchehr Tavanfar, who passed away at the time of the preparation of   this paper.\end{center}
\markboth{Ehsan Tavanfar}{Annihilators of Koszul homologies}
  \begin{abstract}
         In this article,  we propose a question on the annihilators of positive Koszul homologies of a system of parameters of an almost complete  intersection $R$. The question can be stated in terms of the acyclicity of certain (finite) residual approximation complexes whose $0$-th homologies are the residue field of $R$. We show that our question has an affirmative answer  for the  first Koszul homology of any almost complete intersection, as well as for all positive Koszul homologies of certain system of parameters which exist in some almost complete intersection rings with small multiplicities.  The statement about the first Koszul homology is shown to be equivalent to the  Monomial Conjecture and thus follows from its validity.
  \end{abstract}


\section{Introduction}  

 Hochster's Monomial Conjecture, which has been recently settled affirmatively by Yves Andre in \cite{AndreLaConjecture}, was a challenging open question in Commutative Algebra for about four decades and it has various equivalent forms.  One of them, which inspired many  results in this paper, is given by Dutta in \cite{DuttaTheMonomial}  and states that  an almost complete intersection ring $R$ and a system of parameters $\mathbf{x}$ of $R$ satisfies the inequality, $\ell_R\big(R/(\mathbf{x})\big)\gneq \ell\big(\Ht_1(\mathbf{x},R)\big)$. In particular, Dutta's Theorem reduces the Monomial Conjecture to  almost complete intersection rings. In this direction, we present Proposition \ref{AlmostSmall}, which shows that almost complete intersection rings may play a further role in the context of the homological conjectures, despite the establishment of the Monomial Conjecture.  

  In the present paper, we show that the aforementioned Dutta's inequality is equivalent to the assertion, $\big((\mathbf{x}):\mam\big)\subseteq 0:_R\Ht_1(\mathbf{x},R)$ (see, Proposition \ref{KoszulIsHard }). Bearing this equivalence in mind, the   paper addresses the following question.

 \begin{ques}\label{Q1} Let $R$ be an  almost complete intersection and $\mathbf{x}$ be a system of parameters of $R$.  Then is $\big((\mathbf{x}):\mam\big)\subseteq 0:_R\Ht_i(\mathbf{x},R)$ for each $i\ge 1$? 
 \end{ques}

Approximation complexes, as a variant of Koszul type complexes,   are introduced and investigated  in \cite{HerzogSimisKoszul}. A new generation of approximation complexes, so-called,  residual approximation complexes are invented  in \cite{HassanzadehResidual}, to establish a conjecture on the  Cohen-Macaulayness of certain residual intersections.  Then, in, \cite{HassanzadehNaelitonResidual}, the authors show that  the acyclicity of the residual approximation complexes has strong connection with  annihilators of Koszul homologies, so that the foregoing question can be rephrased, equivalently, as follows.  

\begin{ques}\label{Q2}
	Let $(R,\mam)$ be an almost complete intersection, $\mathbf{x}$ be  a system of parameters for $R$  and $z\in \big((\mathbf{x}):\mam\big)\backslash (\mathbf{x})$, i.e.\footnote{To be more precise on this equivalence, note that, by definition of the colon ideal, $z\in\big((\mathbf{x}):\mam\big)$ if and only if $\mam\subseteq \big((\mathbf{x}):z\big)$. On the other hand, $z\notin (\mathbf{x})$, precisely when, $(\mathbf{x}):z$ is a proper ideal. Combining these facts we get, $z\in\big((\mathbf{x}):\mam\big)\backslash (\mathbf{x})$ if and only if $\mam=\big((\mathbf{x}):z\big)$.} $(\mathbf{x}):z=\mam$. Then we are endowed with a residual approximation complex $\mathcal{Z}^+_\bullet\big(\mathbf{x},z\big)$ which is a finite complex consisting of Koszul cycles of  $(\mathbf{x},z)$ satisfying, $\Ht_0\bigg(\mathcal{Z}^+_\bullet\big(\mathbf{x},z\big)\bigg)=R/\mam$. The question is  whether $\mathcal{Z}^+_\bullet\big(\mathbf{x},z\big)$  resolves $R/\mam$, i.e., whether $\mathcal{Z}^+_\bullet\big(\mathbf{x},z\big)$ is an acyclic complex?
\end{ques}

Theorem \ref{AcyclicitySettled}  proves these 
questions in the affirmative in  the non-trivial case where the system of parameters $\mathbf{x}$ contains $\mam^2$ (with $x_1=p$ if, moreover, $R$ has mixed characteristic $p>0$). To this aim,  
we firstly found that our problem reduces to the case where $R$ has multiplicity $2$. Thereafter, by looking at  examples using the Macaulay2 program, it was guessed that any such an almost complete intersection has to satisfy an inequality, $$\dep(R)\ge \dim(R)-2=\dime(R)-\et(R),$$ where $\et(R)$ denotes the multiplicity of $R$. The validity of this inequality is then proved by which the above questions are answered positively.     In the beginning of  section $2$, a more detailed overview of the structure of the proof of our main result is given.

An example is presented to show that the inequality $\dep(R)\ge \dim(R)-\et(R)$ does not hold in general if we drop  the assumption $\et(R)\le 2$ on the almost complete intersection $R$.\footnote{This example is due to S. Hamid Hassanzadeh.}
We stress that  the violation of the inequality $\dep(R)\ge \dime(R)-\et(R)$ for an almost complete intersection $R$ with $\et(R)=3$ (or with higher multiplicity) does not imply that  the answer of Question \ref{Q1} (equivalently Question \ref{Q2}) is negative.

  \section{The   results}
  
  In the preliminary part of this section we introduce and fix the  notation used throughout the article, and we recall  certain theorems which exist already in the literature but are part of the backbone of the proof of the main result of our paper.  Since this is a rather long  section,  we outline the structure of the section as follows.
  
  As mentioned in the introduction, the main result is Theorem \ref{AcyclicitySettled}. It settles affirmatively the equivalent questions of the introduction in the special  case where, roughly speaking, the parameter ideal contains the second power of the maximal ideal. In the light of Proposition \ref{KoszulIsHard } whose proof exploits  Dutta's Theorem (restated in our paper as Theorem \ref{DuttaCriterion}), we show that the restriction of  Question \ref{Q1}  to the first Koszul homologies  is another form of  Hochster's Monomial Conjecture (which is a Theorem now). Thus Proposition \ref{KoszulIsHard } shows that for answering Question \ref{Q1}, in the general case and not only in  our special case,  it suffices to verify the annihilators of Koszul homologies $\Ht_i(\mathbf{x},R)$ with $i\ge 2$. The proof of  Theorem \ref{AcyclicitySettled}, is divided into two cases, in  one case 
   it is shown that if our special s.o.p. $\mathbf{x}$ is not a part of a minimal generating set of the maximal ideal $\mam$, then, in view of the Lemma \ref{HilbertBurch} in conjunction with the finite free extensions of Remark \ref{SquareOfSequence}, our almost complete intersection has Cohen-Macaulay defect at most $1$ and so we are done in this case.  The second case of the proof of Theorem \ref{AcyclicitySettled} deals with the case when $\mathbf{x}$ is a part of a minimal generating set of $\mam$.  In this case we have Proposition \ref{aci has e<=2} (deduced from Lemma \ref{lem edim-dim=2}) which together with Lemma \ref{InequalityHolds} imply that  our almost complete intersection has Cohen-Macaulay defect and multiplicity at most $2$, i.e. there are at most two non-zero positive Koszul homologies with respect to the parameter ideals. Consequently, it turns out that, we are required only to  inspect the annihilator of  $\Ht_2(\mathbf{x},R)$, for our  special s.o.p. $\mathbf{x}$. Then the proof ends after applying,  Serre's Euler Poincar\'e Characteristic description of the multiplicity in conjunction with  Dutta's equivalence, to deduce that  $\Ht_2(\mathbf{x},R)$ has to be  a $1$-dimensional vector space.
   
   After the preliminary part,  some auxiliary lemmas  are given to deduce  Lemma \ref{InequalityHolds}. Lemma \ref{InequalityHolds}, which is of intrinsic interest and yet is another application of Dutta's equivalence, settles a nice   inequality between the Cohen-Macaulay defect and multiplicity for almost complete intersections with multiplicity $\le 2$. While, in view of Proposition \ref{HassanzadehExample}, this inequality does not hold for an almost complete intersection $R$ with unknown multiplicity $3\le e(R)\le 600$, but yet Lemma \ref{InequalityHolds} besides its key application in the proof  of our main theorem has found further applications in some special cases of  Stillman's conjecture (see, \cite{KhouryOnTheProjective}\footnote{In \cite{KhouryOnTheProjective}, Lemma \ref{InequalityHolds} is cited as, Theorem 2.5, which is with respect to  the earlier  arXiv versions of our paper. The connection of Lemma \ref{InequalityHolds} with special cases of the Stillman's conjecture was firstly pointed to the author by S. H. Hassanzadeh.}).
   
  This section ends with  two examples one of them demonstrates that the class of special  almost complete intersections of Theorem \ref{AcyclicitySettled} includes non-Cohen-Macaulay instances.

      To fix the notation used in the article, let $(R,\mam,k)$ be a Noetherian local ring of dimension $d$,   $\maa$ be an  ideal of $R$ and $M$ be a finitely generated $R$-module. The notation $\mu_R(M)$ (resp. ht$_R(\mathfrak{a})$, $\ell_R(M)$), stands for the minimal number of generators of $M$ as $R$-module (resp. the height of the ideal $\maa$ of $R$, the length of $M$ as an $R$-module). Also  $\text{Soc}_R(M)=0:_M\mam\cong \text{Hom}_R(R/\mam,M)$. The injective envelop of the residue field $R/\mam$ of $R$ is denoted by $\Et_R(R/\mam)$, and the canonical module of $R$ is the,  unique up to isomorphism if exists, finitely generated $R$-module $\omega_R$ such that $\text{Hom}_R\big(\omega_R,\Et_R(R/\mathfrak{m})\big)\cong \Ht^d_\mam(R)$. When $R$ is a domain, $\text{rank}_R(M)$ denotes the $\Frac(R)$-vector space dimension of $M\otimes_R \Frac(R)$. Occasionally, when the ambient ring is fixed and  clear and no confusion is likely to arise, we remove the subscript $R$ from the aforementioned (relevant) notation. The notation $\text{V}(\maa)$ denotes the set of prime ideals of $R$ containing the ideal $\maa$. Moreover, $\text{assht}(R)=\{\map\in \Ass(R):\dim(R/\map)=\dim(R)\},$ and $$\text{assht}(\maa)=\{\map\in \text{ass}(\maa):\dim(R/\map)=\dim(R/\maa)\}.$$

        We say that $R$ is an almost complete intersection whenever $R$ is a residue ring of a regular local ring $A$ by an ideal $\maa$ such that $\maa$ can be generated minimally by  $\hit(\maa)+1$ elements (such a defining ideal, is also called an almost complete intersection ideal). The Koszul complex and Koszul homologies   of a sequence $\mathbf{x}$ with coefficients in an $R$-module $M$ are denoted by $K_{\bullet}(\mathbf{x},M)$ and $\Ht_i(\mathbf{x},M)$.
      
      If $\maa$ is an  $\mam$-primary ideal generated by a sequence $\mathbf{y}:=y_1,\ldots,y_t$, then, the Hilbert-Samuel multiplicity of $M$ with respect to the ideal, $\maa$, is the natural number,
      $$e_R(\mathbf{y},M)=e_R(\maa, M):=\lim\limits_{n\rightarrow \infty}d!\frac{\ell_R(M/\maa^nM)}{n^d}. $$ Furthermore, the notation, $e_R(M)$ (or $e(M)$ when the ambient ring is clear), stands for the Hilbert-Samuel multiplicity, abbreviated as multiplicity, of $M$ with respect to the maximal ideal $\mam$ of $R$.   We refer to the excellent book \cite{HermannIkedaEquimultiplicity} for general theory of the multiplicity. 
      
      \begin{rem} \label{MultiplicityInfiniteResidueField}
       Lots of problems on multiplicity can be reduced to the case where $R$ is a complete local ring with algebraically closed residue field. Namely, If $(S,\man)$ is a weakly unramified\footnote{That is, $\mam S=\man$.} flat $(R,\mam,k)$-algebra, then in view of the above formula, or by \cite[(5.1) Proposition]{HermannIkedaEquimultiplicity}, we have, $\et_R(M)=\et_S(M\ten_R S)$, for each finitely generated $R$-module $M$. In particular, $\et(R)=\et(\Rc)$. Once we now that $R$ is a complete local ring then we may choose a Cohen-presentation, $$R=C_k[[X_1,\ldots,X_m]]/\maa,$$ of $R$ wherein either $C_k=k$ or $C_k$ is  an unramified complete discrete valuation ring of mixed characteristic with the residue field $k$. Then there exists a faithfully flat weakly unramified embedding, $\eta:C_k\rightarrow C_\mathcal{K}$\footnote{In the mixed characteristic, the existence of $\eta $ is followed by \cite[Theorem 29.2.]{MatsumuraCommutative}. Moreover, the flatness of $C_\mathcal{K}$ over $C_k$ is a consequence of the $p$-torsion-freeness of $C_\mathcal{K}$.},
       	such that $\mathcal{K}$ is the algebraically closure of $k$, and either $C_\mathcal{K}=\mathcal{K}$ or $C_\mathcal{K}$ is an unramified complete discrete valuation ring of mixed characteristic with the residue field $\mathcal{K}$. $\eta$ promotes to a weakly unramified  flat  monomorphism, $C_k[[X_1,\ldots,X_m]]/\maa\rightarrow S:=C_\mathcal{K}[[X_1,\ldots,X_m]]/\maa^e$.\footnote{Here, the notation $\maa^e$, stands for the extension of $\maa$ to $C_\mathcal{K}[[X_1,\ldots,X_m]]$.} Consequently, $\et(R)=\et(\Rc)=\et(S)$.
     \end{rem}
 
	    Listing, as already done below, the collection of the  standard but key theorems used in the sequel on multiplicity theory, etc.,  might be handy to read the article.
	    
	    \begin{rem}\label{FabulousTheorems} Let $(R,\mam,k)$ be a local ring of dimension $d$,  $M$ be a finitely generate $R$-module and $\mathbf{x}:=x_1,\ldots,x_d$ be a s.o.p. for $R$. The following statements hold.
	    	\begin{enumerate}
	    	\item[(i)]  If $k$ is an infinite field, then by \cite[(4.15) Remark]{HermannIkedaEquimultiplicity} there exists a
	    	 s.o.p. $\mathbf{x}'$ of $R$ such that $e(\mathbf{x}',R)=e(R)$.
	    	 \item [(ii)] By virtue of \cite[Appendix II, Corollary, page 90]{SerreLocalAlgebra} for each $i\ge 0$ we are endowed with the following positivity  of  the Higher Euler-Poincar\'e Characteristic, $$\mathcal{X}_i(\mathbf{x},M):=\sum\limits_{j\ge i}(-1)^{j-i}\ell_R\big(\Ht_j(\mathbf{x},M)\big)\ge 0.$$
	    	 \item[(iii)] In the light of and beauty of \cite[Appendix II, Remark, page 90]{SerreLocalAlgebra},  the vanishing of the Higher Euler-Poincar\'e Characteristic (defined in the previous part) is equivalent to the vanishing of the relevant Koszul homologies, i.e. for $i\ge 1$ we have, $$ \mathcal{X}_i(\mathbf{x},M)=0,\ \text{if and only if},\ \Ht_j(\mathbf{x},M)=0\ \forall j\ge i.$$
	    	 \item[(iv)] Serre's formula provides the following intriguing handy description of the multiplicity with respect to the s.o.p.  $\mathbf{x}$ in terms of the Euler-Poincar\'e characteristic  of the Koszul homologies, i.e. we are endowed with the identity, $\mathcal{X}_0(\mathbf{x},M)=\sum\limits_{i=0}^d(-1)^i\ell_R\big(\Ht_i(\mathbf{x},M)\big)=\et_R(\mathbf{x},M).$ See, \cite[Theorem 1, page 57]{SerreLocalAlgebra} or \cite[Theorem 4.7.6]{BrunsHerzogCohenMacaulay} for two references to this identity.
	    	 \item[(v)] By, e.g., \cite[Theorem 16.8, page 131]{MatsumuraCommutative} Koszul complex is depth sensitive, i.e. $$ d-\text{sup}\{i:\Ht_i(\mathbf{x},M)\neq 0\}=\text{depth}_R(M).$$
	    	 \item[(vi)] (Noether normalization, the complete local case) Suppose that $R$ is complete. Only if $R$ has mixed characteristic $(0,p)$ we impose the hypothesis that $x_1=p$. By, e.g., \cite[Theorem A.20.]{BrunsHerzogCohenMacaulay} there exists a subring $C_k$, so-called the coefficient ring, of $R$ such that either $C_k=k$ (precisely when $R$ has equal characteristic) or $C_k$  is a complete unramified discrete valuation ring of mixed characteristic  with the residue field $k$ (precisely when $R$ has mixed characteristic which would be necessarily $(0,p)$ as $p$ is a parameter element) and such that $C_k\rightarrow R$ induces  isomorphism on residue fields  (the completeness of $C_k$ is not stated in \cite[Theorem A.20]{BrunsHerzogCohenMacaulay}, but it is true that we can take $C_k$ as a complete ring as well, see e.g. \cite[Theorem 29.2]{MatsumuraCommutative} or \cite[Theorem 29.3]{MatsumuraCommutative}). If $R$ is equi-characteristic then by equi-characteristic case of \cite[Theorem A.22]{BrunsHerzogCohenMacaulay}, $C_k[[x_1,\ldots,x_d]]$ is a (power series over a field) subring of $R$ over which $R$ is a module finite extension. As well, if $R$ has mixed characteristic, since $p$ is a parameter element and can be extended to a s.o.p., so one can argue as \cite[Theorem A.22(ii)]{BrunsHerzogCohenMacaulay} (which is stated only for domains)  to see that again $R$ is module finite over its subring $C_k[[x_2,\ldots,x_d]]$ which is a power series ring.
	    	\end{enumerate}
	    \end{rem}   
    
   To close the preliminary part of this section, we present the following Dutta's equivalence which is essential to the results of our article and shall be applied repeatedly. But the  next theorem has two parts.  While in the first part we copy  Dutta's Theorem verbatim, but it is quite routine to check that the second part, whose statement is  adapted to the setting of our article, is another equivalent form of the part (i).
   
   \begin{thm}\label{DuttaCriterion}(see, \cite[1.3. Proposition]{DuttaTheMonomial}) We have the following two equivalent statements.
   	\begin{enumerate}
   		\item [(i)] The Monomial Conjecture is valid for all local rings if and only if for every regular local ring $R$ and for every pair of ideals $(I,J)$ of $R$ such that i) $I$ is a complete intersection, ii) $J$ is an almost complete intersection (i.e. $J$ is minimally generated by (height $J+1$) elements, iii) height $I$+height $J=\dim\ R$ and iv) $(I+J)$ is primary to the maximal ideal of $R$, the following length inequality holds:
   	$$ \ell\big(R/(I+J)\big)>\ell\big(\tor^R_1(R/I,R/J)\big).$$
   	    \item[(ii)] (c.f. \cite[1.2 Proposition]{DuttaTheMonomial}) The Monomial Conjecture is valid for all local rings if and only if for every almost complete intersection $R$ and any system of parameters $\mathbf{x}$ of $R$, the inequality, $\ell\big(\Ht_1(\mathbf{x},R)\big)< \ell(R/(\mathbf{x}))$, holds.
   	 \end{enumerate}
   \end{thm}
    
   \medskip

    We need several auxiliary facts to prove Lemma \ref{InequalityHolds}. The first one is  a general lemma:
    \begin{lem}\label{LemmaLift} Let $(A,\man)$ be a regular local ring and $\mab=(y_1,\ldots,y_s)$ be an almost complete intersection ideal of $A$. Let $R=A/\mab$ and suppose that  $\dim(R)=d$ and $\mathbf{x}:=x_1,\ldots,x_d$  is a system of parameters of  $R$. For an arbitrary lift, $\tilde{x_1},...,\tilde{x_d}$, of $\mathbf{x}$ to $A$, we may assume, without loss of generality (up to changing the generators of $\mab$), that  $\tilde{x_1},...,\tilde{x_d},y_1,\ldots,y_{s-1}$ is a regular sequence of $A$.
    \end{lem}
    \begin{proof}
    Let $\maa:=(\tilde{x_1},\ldots,\tilde{x_d})$. Note that, $\hit(\maa)=d$,  otherwise  there exists a prime ideal $\map\in \text{V}(\maa)$ of height $\le d-1$ and a prime ideal $\maq\in \text{assht}(\mab)$ such that,  by virtue of \cite[Theorem 3., page 110]{SerreLocalAlgebra}, $$\hit_A(\maa+\mab)\le \hit_A(\map+\maq)\le \hit_A(\map)+\hit_A(\maq)< d+s-1=\dime(A/\mab)+\hit_A(\mab)=\dime(A),$$ contracting with the fact that the image of a set of generators of  $\maa$ extends to a system of parameters for $R$. Thus $\maa$ satisfies, $\hit(\maa)=\mu(\maa)=d$, and whereby it is a complete intersection. The rest of the proof of the claim is a standard method in commutative algebra based on the application of, \cite[Theorem 124., page 90]{KaplanskyCommutative}, in conjunction with the fact that, $\hit(\maa+\mab)=\dime(A)$, similar to  the solution of \cite[ 1.2.21, page 15]{BrunsHerzogCohenMacaulay}.
    \end{proof}

    The following Lemma is one of the main  ingredients of the proof of Lemma \ref{InequalityHolds}. The fact about the depth of the canonical module is known to the experts.

    \begin{lem} \label{CanonicalModule}
    	Let $R$ be an almost complete intersection of dimension $d$ and $\mathbf{x}:=x_1,\ldots,x_d$  be a system of parameters for $R$. Then the following statements hold.
    	\begin{enumerate}
    		\item[(i)] There exists an exact sequence,
    		\begin{center}
    			$0\rightarrow \Ht_2(\mathbf{x},R)\rightarrow \omega_R/(\mathbf{x})\omega_R\overset{\theta}\rightarrow \omega_{R/(\mathbf{x})R}{\rightarrow} \Ht_1(\mathbf{x},R)\rightarrow 0$.
    		\end{center}
    		
    		\item[(ii)] $\Ht_i(\mathbf{x},R)\cong \Ht_{i-2}(\mathbf{x},\omega_R)$, for each $i\ge 3$.
    		
    	    \item[(iii)]   $$\begin{cases}\dep(\omega_R)=\dep(R)+2,\ & \text{if~~} \dep(R)\le d-2\\ \dep(\omega_R)=d,\ & \text{if~~}\dep(R)\ge d-1.\end{cases}$$
    		
    		\item[(iv)] $\et(\mathbf{x},\omega_R)=\et(\mathbf{x},R).$
    		
    	\end{enumerate}
\end{lem}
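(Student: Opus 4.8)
The plan is to reduce everything to a complete intersection sitting between $A$ and $R$. Writing $R=A/\mab$ with $\mab=(y_1,\ldots,y_s)$ an almost complete intersection ideal, Lemma \ref{LemmaLift} lets me lift $\mathbf{x}$ to a sequence $\tilde x_1,\ldots,\tilde x_d\in A$ (which I may take congruent to $\mathbf{x}$ modulo $\mab$) so that $\tilde x_1,\ldots,\tilde x_d,y_1,\ldots,y_{s-1}$ is an $A$-regular sequence. Put $B:=A/(y_1,\ldots,y_{s-1})$, a complete intersection, hence Gorenstein, with $\omega_B\cong B$ and $\dim B=d$. Then the images of $\tilde x_1,\ldots,\tilde x_d$ form a $B$-regular system of parameters, while $R=B/\overline{y_s}B$, where $\overline{y_s}$ is a zerodivisor of $B$ lying in a minimal prime (this is exactly what keeps $R$ a genuine almost complete intersection rather than a complete intersection). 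Since $\dim B-\dim R=0$, the standard description of the canonical module of a quotient of a Gorenstein ring gives $\omega_R\cong\homm_B(R,B)=(0:_B\overline{y_s})$, and the central object becomes the four-term exact sequence of $B$-modules
\[
0\rightarrow \omega_R\rightarrow B\overset{\overline{y_s}}\rightarrow B\rightarrow R\rightarrow 0,
\]
which I split into the short exact sequences $0\to\omega_R\to B\to \overline{y_s}B\to 0$ and $0\to \overline{y_s}B\to B\to R\to 0$.

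Because the lifted $\mathbf{x}$ is $B$-regular, $K_\bullet(\mathbf{x},B)$ is a free resolution of $\bar B:=B/(\mathbf{x})B$ over $B$, so $\Ht_i(\mathbf{x},M)\cong\tor^B_i(\bar B,M)$ for every $B$-module $M$; in particular $\Ht_i(\mathbf{x},B)=0$ for $i\ge 1$, and $\bar B$ is an Artinian Gorenstein ring with $\omega_{\bar B}\cong\bar B$. Feeding the two short exact sequences into the long exact sequence of $\tor^B_\bullet(\bar B,-)$ and using this vanishing, for $i\ge 3$ I splice the isomorphisms $\Ht_i(\mathbf{x},R)\cong\Ht_{i-1}(\mathbf{x},\overline{y_s}B)\cong\Ht_{i-2}(\mathbf{x},\omega_R)$, which is exactly (ii). For the low degrees I track the same long exact sequences down to $\Ht_0$: identifying $\Ht_0(\mathbf{x},\omega_R)=\omega_R/(\mathbf{x})\omega_R$, observing $R/(\mathbf{x})R=\bar B/\overline{y_s}\bar B$ and hence $\omega_{R/(\mathbf{x})R}=(0:_{\bar B}\overline{y_s})$, the reduction modulo $(\mathbf{x})$ of the inclusion $\omega_R\hookrightarrow B$ descends to the natural map $\theta\colon \omega_R/(\mathbf{x})\omega_R\to\omega_{R/(\mathbf{x})R}$, and a diagram chase identifies $\Ker\theta\cong\Ht_2(\mathbf{x},R)$ and $\coker\theta\cong\Ht_1(\mathbf{x},R)$, which is (i).

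Part (iv) is then immediate from additivity of Hilbert--Samuel multiplicity on short exact sequences: the two short exact sequences give $\et(\mathbf{x},B)=\et(\mathbf{x},\omega_R)+\et(\mathbf{x},\overline{y_s}B)$ and $\et(\mathbf{x},B)=\et(\mathbf{x},\overline{y_s}B)+\et(\mathbf{x},R)$, and subtracting yields $\et(\mathbf{x},\omega_R)=\et(\mathbf{x},R)$. (Equivalently, one localizes at the minimal primes of maximal dimension, where $\omega_R$ becomes the canonical module of an Artinian local ring and Matlis duality preserves length, and then invokes the associativity formula.)

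For (iii) I run the same two short exact sequences through the long exact sequence of local cohomology $\Ht^\bullet_{\mam}(-)$. Since $B$ is Cohen--Macaulay, $\Ht^i_{\mam}(B)=0$ for $i<d$, so in the range $i,j\le d-1$ the connecting maps give $\Ht^i_{\mam}(\overline{y_s}B)\cong\Ht^{i-1}_{\mam}(R)$ and $\Ht^j_{\mam}(\omega_R)\cong\Ht^{j-1}_{\mam}(\overline{y_s}B)$; reading off the least nonvanishing degree twice shifts $\dep R$ up by two, capped at $\dim\omega_R=d$, and the case split ($\dep R\le d-2$ versus $\dep R\ge d-1$) produces the stated formula. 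The main obstacle I anticipate is bookkeeping rather than ideas: one must pin down the connecting homomorphisms precisely enough to identify $\theta$ together with its kernel and cokernel in (i), and handle the boundary cases of the depth count in (iii), where the two one-step shifts saturate at $d$. Everything else follows formally from the single structural input $\omega_R\cong(0:_B\overline{y_s})$ and the acyclicity $\Ht_i(\mathbf{x},B)=0$ for $i\ge 1$ furnished by the lift lemma.
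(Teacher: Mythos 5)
Your proposal is correct, and at its core it rests on exactly the same structural reduction as the paper: lift $\mathbf{x}$ via Lemma \ref{LemmaLift}, pass to the Gorenstein complete intersection $B=A/(y_1,\ldots,y_{s-1})$ of dimension $d$, and identify $\omega_R\cong\homm_B(R,B)=(0:_B\overline{y_s})$. Where you diverge is in the homological packaging. The paper runs the two-column double complex $\Kt_p\big(\mathbf{x};A/(\mathbf{y}')\big)\ten_A\Kt_q(y_s;A)$ and extracts (i) from the five-term exact sequence and (ii) from the $d^2$-isomorphisms of the second spectral sequence; you instead split the four-term sequence $0\to\omega_R\to B\xrightarrow{\overline{y_s}}B\to R\to 0$ into two short exact sequences and use the long exact sequences of $\tor^B_\bullet(B/(\mathbf{x})B,-)$ (legitimate, since $\mathbf{x}$ is $B$-regular, so Koszul homology is Tor over $B$). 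Since the double complex has only two columns, these are essentially the same computation, and your diagram chase for $\Ker\theta$ and $\coker\theta$ does go through (the image of $\omega_R/(\mathbf{x})\omega_R$ in $\bar B=B/(\mathbf{x})B$ is killed by $\overline{y_s}$, so $\theta$ is well defined, and $\coker\theta\cong\big(\overline{y_s}B\cap(\mathbf{x})B\big)/(\mathbf{x})\overline{y_s}B\cong\Ht_1(\mathbf{x},R)$). The genuine differences are in (iii) and (iv), and there your route has real advantages. For (iv) you use additivity of $\et(\mathbf{x},-)$ along the two short exact sequences, which is cleaner than the paper's computation via Serre's formula combined with (i) and (ii) (just note that all modules involved are $B$-modules with $\dime B=d$, so the degree-$d$ normalization makes the additivity valid). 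For (iii) the paper in fact gives no explicit argument (it is flagged as ``known to the experts,'' and implicitly follows from (ii) together with depth sensitivity of the Koszul complex), whereas your double shift of local cohomology through the two short exact sequences, using $\Ht^i_{\mam}(B)=0$ for $i<d$, is a complete and self-contained proof, with the saturation at $d$ handled correctly because the isomorphisms $\Ht^j_{\mam}(\omega_R)\cong\Ht^{j-2}_{\mam}(R)$ are only claimed for $j\le d-1$. Two cosmetic points: ``regular system of parameters'' of $B$ should read ``system of parameters forming a $B$-regular sequence'' (B need not be regular), and a zerodivisor a priori lies only in an associated prime --- it is the equality $\dime(B/\overline{y_s}B)=\dime B$, together with unmixedness of the Cohen--Macaulay ring $B$, that places $\overline{y_s}$ in a minimal prime; neither point affects the argument.
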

	
    	\begin{proof}
    			Assume that $(A,\man)$ is a regular local ring and $\mab=(y_1,\ldots,y_s)$ is an almost complete intersection ideal of $A$ such that $R=A/\mab$. According to Lemma \ref{LemmaLift}, there exists an ideal $(\tilde{x_1},...,\tilde{x_d})A$ of $A$ which is a lift of the ideal $(x_1,...,x_d)R$ of $R$ such that  $\maa=(\tilde{x_1},\ldots,\tilde{x_d})$ is a complete intersection  and  $\tilde{x_1},\ldots,\tilde{x_d},y_1,\ldots,y_{s-1}$ forms a regular sequence of $A$. By the abuse of  notation, we use the same notation $x_1,\ldots,x_d$  to denote the lift of $\mathbf{x}$ to $A$.
    		
    		We prove (i), (ii) and (iii).  Let $\mathbf{y}'$ denotes the truncated sequence, $y_1,\ldots,y_{s-1}$. Consider the double complex
		$M_{p,q}:=\Kt_p\big(\mathbf{x};A/(\mathbf{y}')\big)\ten_A \Kt_q\big(y_s;A\big)$.
		 Note that
		 \begin{equation}
    		\label{SpectralSequnece}
    		\Ht_i\big(\text{Tot}(M)\big)\cong \Ht_i\big(\mathbf{x},y_s;A/(\mathbf{y}')\big)\cong \Ht_i\big(y_s;A/(\mathbf{x},\mathbf{y}')\big)=\begin{cases}
    		0, &  i\ge 2 \\	0:_{A/(\mathbf{x},\mathbf{y}')}y_s\overset{\text{\cite[(1.6)]{AoyamaSomeBasic}}}{\cong}\omega_{R/(\mathbf{x})}, & i=1. \\ R/(\mathbf{x}), & i=0
    		\end{cases}
    		\end{equation}

    		Furthermore, since, $\big((\mathbf{y}'):y_s\big)/(\mathbf{y}')\cong\homm_{A/(\mathbf{y}')}\big(R,A/(\mathbf{y}')\big)\overset{\text{\cite[(1.6)]{AoyamaSomeBasic}}}{\cong}\omega_R$, so we have, $$^{II}\Et^2_{p,q}=\begin{cases}\Ht_q(\mathbf{x};R),&p=0\\ \Ht_q\bigg(\mathbf{x};\Big(\big((\mathbf{y}'):y_s\big)/(\mathbf{y}')\Big)\bigg)\cong\Ht_q(\mathbf{x};\omega_R),& p=1.\\0, &p\neq 0,1\end{cases}$$ Now the desired exact sequence is just the five term exact sequence of this spectral sequence (see, \cite[Theorem 10.31 (Homology of Five-Term Exact Sequence)]{Rotman}). 
    		
    		For the second part note that according to the vanishings of (\ref{SpectralSequnece}) for $i\ge 2$, all of the maps, $$ d^2:\Ht_{i+2}(\mathbf{x};R)\rightarrow \Ht_i(\mathbf{x};\omega_R),\ (i\ge 1)$$ arising from the second page of the spectral sequence are isomorphisms.
    		
    		(iv). The exact sequence of the first part of the lemma implies that, $$\ell\big(R/(\mathbf{x})R\big)-\ell\big(\Ht_1(\mathbf{x},R)\big)=\ell\big(\omega_{R/(\mathbf{x})R}\big)-\ell\big(\Ht_1(\mathbf{x},R)\big)=\ell\big(\omega_R/(\mathbf{x})\omega_R\big)-\ell\big(\Ht_2(\mathbf{x},R)\big).$$
		The Serre's formula of the multiplicity Remark \ref{FabulousTheorems}(iv) together with Lemma \ref{CanonicalModule}(ii) imply that
    		  \begin{align*}
    		  \et(\mathbf{x},R)=\sum\limits_{i=0}^d(-1)^i\ell\big(\Ht_i(\mathbf{x},R)\big)&=\ell\big(R/(\mathbf{x})R\big)-\ell\big(\Ht_1(\mathbf{x},R)\big)+\ell\big(\Ht_2(\mathbf{x},R)\big)+\sum\limits_{i=3}^d(-1)^i\ell\big(\Ht_i(\mathbf{x},R)\big)&\\&=\ell\big(\omega_R/(\mathbf{x})\omega_R\big)-\ell\big(\Ht_2(\mathbf{x},R)\big)+\ell\big(\Ht_2(\mathbf{x},R)\big)+\sum\limits_{i=3}^d(-1)^i\ell\big(\Ht_i(\mathbf{x},R)\big)&\\&=\ell\big(\omega_R/(\mathbf{x})\omega_R\big)+\sum\limits_{i=1}^d(-1)^i\ell\big(\Ht_i(\mathbf{x},\omega_R)\big)&\\&
    		  =\et(\mathbf{x},\omega_R)
    		  \end{align*}
    	\end{proof}

   The next lemma is analogous to  \cite{HunekeARemark}. In contrast to  \cite{HunekeARemark}, our lemma does not need the equi-characteristic assumption \footnote{In \cite{HunekeARemark}, the author needed the equi-characteristic assumption to construct an appropriate regular local  subring, while such a regular subring has been assumed in the statement of our lemma.}  and the ring $R$ in our lemma is not necessarily local. Our proof is similar to \cite{HunekeARemark} with the difference that in our rank $2$ case we do not need to use the Evans-Griffith Syzygy Theorem, and instead  simpler theorems are used. We proved the following lemma  in order to deduce Lemma \ref{InequalityHolds}, but after a while we discovered that \cite{HunekeARemark}  had been proved earlier in 1982 by Huneke.

    \begin{lem} \label{vsHuneke} \label{CohenMacaulayQuadratic}
    	Suppose that $(A,\man)$ is a complete regular local ring. Let $R$ be a module finite   extension of $A$ which is a torsion-free $A$-module of (torsion-free) rank $2$. Then $R$ is Cohen-Macaulay if and only if  $R$ satisfies the Serre condition  $S_2$.
    	  \begin{proof}
    	  	 It suffices to prove that $R$ is Cohen-Macaulay provided $R$ is $S_2$.   By virtue of \cite{HochsterMcLaughlinSplitting},  we have the splitting inclusion $A\rightarrow R$; so that  $R=A\dsum I$ for some $A$-module $I$.  Since $R$ has (torsion-free) rank $2$ over $A$,  $I$ has rank $1$. In particular, we may presume that $I$ is an ideal of $A$ (because $I$ is torsion-free and a finitely generated $A$-module).  Since $R$ is $S_2$,  any part of a system of parameters, $y_1,y_2\in A$ forms a regular sequence on $R$ and thence on $I$. Consequently, $I$ is an ideal of $A$ which satisfies the $S_2$-condition as $A$-module.
    	  	  Therefore $I_\map$ is a Cohen-Macaulay (thus free and reflexive) $A_\map$-module, if  $\dep(A_\map)=\dime(A_\map)\le 1$. Also $\dep(I_\map)\ge 2$, if $\dep(A_\map)=\dime(A_\map)\ge 2$. Consequently, $I$ is a reflexive ideal of $A$ in the light of
		  \cite[Proposition 1.4.1]{BrunsHerzogCohenMacaulay}. Now the result follows from the fact that reflexive ideals of unique factorization domains are principal.
    	  	
    	  \end{proof}
    \end{lem}

 
 The following lemma will be used in the proof of Lemma \ref{InequalityHolds}. 
 
 \begin{lem}\label{Lemma torsionfree} Let $A$ be a domain of dimension $d$  and $S$ be an $A$-algebra which is a finite $A$-module. Then $S$ is unmixed (every associated prime has the same dimension) if and only if it is torsion-free  as an $A$-module.
 \end{lem}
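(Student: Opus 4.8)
The plan is to show that both stated conditions are equivalent to the single statement $\Ass_A(S)=\{(0)\}$, where we regard $S$ as a finitely generated $A$-module; the domain hypothesis on $A$ is exactly what makes this bridge work, so that the ``if and only if'' becomes a matter of chaining two equivalences through this common middle term.

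First I would handle the torsion-free side. Since $S$ is a finite $A$-module, the set of elements of $A$ that are zerodivisors on $S$ is precisely $\bigcup_{\mathfrak{p}\in\Ass_A(S)}\mathfrak{p}$. Because $A$ is a domain, $S$ is torsion-free exactly when no nonzero element of $A$ kills a nonzero element of $S$, i.e. when this union is $(0)$. As every associated prime contains $(0)$ and $\Ass_A(S)\neq\emptyset$ whenever $S\neq 0$, this union equals $(0)$ if and only if $\Ass_A(S)=\{(0)\}$. This yields the equivalence of torsion-freeness with $\Ass_A(S)=\{(0)\}$.

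Next I would handle unmixedness. The crucial elementary fact is that for a domain $A$ of dimension $d$ and a prime $\mathfrak{p}$ of $A$ one has $\dim(A/\mathfrak{p})=d$ if and only if $\mathfrak{p}=(0)$: a saturated chain of primes in $A/\mathfrak{p}$ lifts to one in $A$ starting at $\mathfrak{p}$, and prepending the link $(0)\subsetneq\mathfrak{p}$ when $\mathfrak{p}\neq(0)$ forces $\dim(A/\mathfrak{p})\le d-1$. Combining this with the standard correspondence $\Ass_A(S)=\{P\cap A : P\in\Ass_S(S)\}$ together with the identity $\dim(S/P)=\dim\big(A/(P\cap A)\big)$ (valid because $A/(P\cap A)\hookrightarrow S/P$ is a module-finite extension of domains), the requirement that every associated prime of $S$ have dimension $d$ translates into $P\cap A=(0)$ for all $P\in\Ass_S(S)$, that is, into $\Ass_A(S)=\{(0)\}$. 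Chaining the two equivalences then proves the lemma.

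The step I expect to require the most care is precisely this dimension bookkeeping: one must track whether associated primes and dimensions are being computed over $A$ or over $S$, and the pivotal equivalence $\dim(A/\mathfrak{p})=d\iff\mathfrak{p}=(0)$ genuinely uses that $A$ is a domain, since it fails over a ring carrying minimal primes of differing dimensions. Everything else is the standard dictionary relating torsion to associated primes, so once the reduction to $\Ass_A(S)=\{(0)\}$ is in place on both sides, the argument closes at once.
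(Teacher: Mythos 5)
Your proof is correct and takes essentially the same route as the paper's: both pivot on the equivalence of torsion-freeness with $\Ass_A(S)=\{(0)\}$, and both rely on the contraction correspondence $\Ass_A(S)=\{\mathfrak{q}\cap A:\mathfrak{q}\in\Ass_S(S)\}$ together with the facts that module-finite (integral) extensions preserve dimension and that in a domain only $(0)$ has a full-dimensional quotient. The only difference is cosmetic: you apply the correspondence symmetrically in both directions, while the paper handles the unmixed-to-torsion-free direction by hand (embedding $\mathfrak{p}S$ into an associated prime of $S$ via the annihilator of an element), so yours is a mild streamlining rather than a different method.
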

 \begin{proof}  $S$ is torsion-free over $A$ if and only if $\As_A(S)=\{0\}$.   Assume that $S$ is unmixed.  Let $\map\in \As_A(S)$. Then $\map S\subseteq 0:_Sx$ for some $0\neq x\in S$. Thus $\map S\subseteq \maq$ for some $\maq\in \As_S(S)$. Since $S$ is unmixed and $A/(\maq\ins A)\hookrightarrow S/\maq$ is an integral extension, we have $d=\dime(S/\maq)=\dime\big(A/(\maq\ins A)\big)$ i.e. $\maq\ins A=0$. Therefore $\map=0$ as desired.

 To see the other implication, notice that $\As_A(S)=\{\maq \cap A: \maq \in \As_S(S)\}$ (see \cite[Exercise 6.7]{MatsumuraCommutative}).  Hence $S$ being $A$-torsion-free implies that $\maq \cap A=0$ for all $\maq \in \As_S(S)$. Since $S$ is integral over $A$, we have $\dime(S/\maq)= \dime(A/\maq \cap A)=\dime(A)=d$  for all $\maq \in \As_S(S)$.
 \end{proof}
   The second part of the following lemma is used in the  course of the proof of Theorem \ref{AcyclicitySettled} which  answers   Question \ref{Q1} and Question \ref{Q2} in the affirmative  (in our special case of investigation). Although the second part of the next lemma is stated for equi-characteristic rings, but the proof of Lemma \ref{InequalityHolds}(ii)  yields a mixed characteristic
version as long as there exists an appropriate Noether normalization  (see, Remark \ref{NoetherNormalizationRemark}). The first part of the next lemma  is a refinement  of \cite[Proposition 3.4]{Huneke etal aci} when the  Gorenstein ideal $J$ in the statement of \cite[Proposition 3.4]{Huneke etal aci} is a  complete intersection as well (the part (i) of the next lemma is characteristic free and it is beyond the realm of graded rings).

    \begin{lem} \label{InequalityHolds} Let $R$ be an almost complete intersection. Then
    	  $$ \et(R)\ge \dim(R)-\dep(R) $$
      in the following cases:
       \begin{enumerate}
       	 \item[(i)]   $\dim(R)\le 2$ or $\et(R)=1$,
       	 \item[(ii)] $R$ contains a field and  $\et(R)=2 $,
       	 \item[(iii)]  $R$ contains a field and $\dim(R)=3$.
       \end{enumerate}	
       \end{lem}

       \begin{proof}  In view of Remark \ref{MultiplicityInfiniteResidueField}, without loss of generality, we can assume that $R$ is complete with  infinite residue field.
      
       (i) The case where $\dim(R)\le 1$ is quite trivial. Let  $\dim(R)=2$; so that  we only need to show that  $\et(R)\ge 2$ provided $\dep(R)=0$. If $\dep(R)=0$ then $\Ht_2(\mathbf{y},R)\neq 0$ wherein $\mathbf{y}$ is any system of parameters of $R$. For a suitable  $\mathbf{y}$ as in Remark \ref{FabulousTheorems}(i), Serre's formula, Remark \ref{FabulousTheorems}(iv), states that
	  \begin{equation}\label{eq1}
       	  \et(R)=\ell\big(R/(\mathbf{y})\big)-\ell\big(\Ht_1(\mathbf{y},R)\big)+\ell\big(\Ht_2(\mathbf{y},R)\big).
	  \end{equation}
	
	  Dutta (see, Theorem \ref{DuttaCriterion}(ii)) proved that   the validity of the Monomial Conjecture implies that $ \ell\big(R/(\mathbf{y})\big) -\ell\big(H_1(\mathbf{y},R)\big)\geq 1 $. So that the result follows from (\ref{eq1}).
       	
       	   If $\et(R)=1$, then  for a suitable system of parameters $(\mathbf{y})$, by Remark \ref{FabulousTheorems}(i) and Remark \ref{FabulousTheorems}(iv), $\ell\big(R/(\mathbf{y})\big)-\ell\big(\Ht_1(\mathbf{y},R)\big)+\chi_2(\mathbf{y},R)=1$ where $\chi_2(\mathbf{y},R)=\sum_{j\ge 2}(-1)^{j-2}\ell\big(\Ht_j(\mathbf{y},R)\big)\ge 0$ by virtue of Remark \ref{FabulousTheorems}(ii).  Again by Dutta's result $\ell\big(R/(\mathbf{y})\big)-\ell\big(\Ht_1(\mathbf{y},R)\big)\geq 1$.  Hence the above  equality implies that $\chi_2(\mathbf{y},R)=0,$ therefore   $\Ht_j(\mathbf{y},R)=0$ for $j\ge 2$ by the Serre's  impressive theorem stated in Remark 2.2(iii).  Hence $\dep(R)\ge d-1$.

	   (ii). Now assume that $e(R)=2$ and $R$ contains a field  which must be $k=R/\mam$.  Again, by Remark 2.1 and Remark 2.2(i) we can assume that there exists a system of parameters $\mathbf{x}$ for $R$ such that $\et(\mathbf{x},R)=\et(R)= 2$.
	   	
	  Now, let $S$ be the $S_2$-ification of $R$ and $R^{unm}=R/U$ where $U$ is the intersection of the primary components of $R$ associated to assht$(R)$.  It is known that (see for example \cite{AoyamaGotoOnTheEndomorphism} and \cite{AoyamaSomeBasic}) $S$ is an unmixed finite $R$-module and    $\omega_R\simeq \omega_S$.  As well there is an injection $h:R^{unm}\to S$ whose cokernel has dimension at most $d-2$.

	   Since, by assumption, $R$ contains a field, there exists a regular local subring $A$  of $R^{unm}$ such that $\mathbf{x}$ forms the regular system of parameters for $A$, the residue field of $A$  is $k$ and $R^{unm}$ is a finitely generated $A$-module (see, Remark 2.2(vi)).  By Lemma \ref{Lemma torsionfree}, $R^{unm}$ is a torsion-free $A$-module. So that we may apply the projection formula of multiplicity \cite[Corollary 6.5]{HermannIkedaEquimultiplicity} in conjunction with \cite[Corollary 4.7.9]{BrunsHerzogCohenMacaulay} to deduce that
	   $$\et_A(\mathbf{x},R^{unm})=[\frac{R^{unm}}{\mam}:\frac{A}{(\mathbf{x})}]\et(\mathbf{x},R^{unm}).$$
	   Since   $R^{unm}/\mam=A/(\mathbf{x})=k$, we get $e_A(\mathbf{x},R^{unm})=\et(\mathbf{x},R^{unm})$. An application of \cite[Corollary 4.7.8]{BrunsHerzogCohenMacaulay} shows that $\et(\mathbf{x},R^{unm})=\et(R)=2$.  Hence $\et_A(\mathbf{x},R^{unm})=2$. Now, considering the structure map $h:R^{unm}\to S$  
	   we get $\et_A(\mathbf{x},S)=2$ ($\coker h$ has dimension at most $d-2$, $h$ is injective and multiplicity $\et_A(\mathbf{x},-)$ is additive on exact sequences).

	   Yet another application of the associativity formula \cite[Corollary 4.7.9]{BrunsHerzogCohenMacaulay} implies that
	   $${\rm rank}_A(S)\et(\mathbf{x},A)=\et_A(\mathbf{x},S).$$

	  Since $\mathbf{x}$ is a regular system of parameters of $A$, $\et(\mathbf{x},A)=1$; so that ${\rm rank}_A(S)=2$. Since $S$ is an unmixed and finite $A$-module, Lemma \ref{Lemma torsionfree} implies that $S$ is a torsion-free $A$-module of rank $2$. Therefore $S$ is Cohen-Macaulay according to Lemma \ref{vsHuneke}. Hence  $\omega_R(\simeq \omega_S)$ is a maximal Cohen-Macaulay $R$-module. So that Lemma \ref{CanonicalModule}(iii) implies that  $\dep(R)\ge d-2$ as desired.
	
       	  (iii) Assume that  $\dim(R)=3$ and $R$ contains a field. If $\dep(R)\ge 2$	then the statement is trivial. If $\dep(R)=1$ then the statement follows  from Dutta's Theorem \ref{DuttaCriterion}(ii) similarly as in the proof of part (i). If $\dep(R)=0$ then  we must have $\et(R)\ge 3$, otherwise in the light of the preceding part we get a contradiction with $\dep(R)\ge \dim(R)-\et(R)\ge 1$.
       \end{proof}

    \begin{rem}\label{NoetherNormalizationRemark}
      The reason for the restriction of equal characteristic in the statements of part (ii) and (iii) of Lemma \ref{InequalityHolds} is that:  an arbitrary system of parameters $\mathbf{x}$ of a mixed characteristic complete local ring $R$ does not necessarily provide  a Noether normalization $A\rightarrow R$ such that $\mathbf{x}$ is a regular system of parameters of $A$. Hence we cannot apply the proof of Lemma \ref{CohenMacaulayQuadratic} or \cite{HunekeARemark} to conclude that a commutative local  $S_2$ ring of multiplicity  $2$ is Cohen-Macaulay, in general.  In fact, to the best of our knowledge, the mixed characteristic case of Huneke's  \cite{HunekeARemark} is still an open problem. See  \cite{OcarrollOnATheorem} for a discussion on the mixed characteristic version of \cite{HunekeARemark}.
    \end{rem}
   In contrast to Lemma \ref{InequalityHolds}, the following proposition shows that the inequality $$\dep(R)\ge \dime(R)-\et(R)$$ fails for some almost complete intersection $R$    whose multiplicity is an unknown natural number in the interval  $[3,600]$. At the time of preparation of the paper, I do not know any counterexample for an almost complete intersection whose multiplicity is precisely $3$, although one might expect that such a counterexample  exists.  It is noteworthy to stress that the existence of such a counterexample (with multiplicity $3$ or higher) does not imply that   Question \ref{Q1} and Question \ref{Q2} have negative answers.

   \begin{prop}\label{HassanzadehExample}(\cite{Highprojdim}) Let  $K$ be a field  and  $p$ any positive integer.  There exists an almost complete intersection $R$ (with three relations) containing $K$ such that $\dime(R)-\dep(R) \geq  p^{p-1}-2$ and $\et(R)\leq p^4-p^2+1$. In particular, there exists an almost complete intersection $R$ such that $\et(R)\le 600$ but $\dim(R)-\dep(R)\ge 623$.
   \end{prop}
   \begin{proof}According to  \cite[Corollary 3.6]{Highprojdim}, over any field $K$ and for any positive integer $p$, there exists an ideal $I$ in a polynomial ring $S$ over $K$ with three homogeneous generators in degree $p^2$ such that ${\rm pd}(R=S/I) \geq  p^{p-1}$. The ideal $I$ in loc.cit. has codimension $2$ hence by Auslander-Buchsbaum Formula $\dime(R)-\dep(R)={\rm pd}_S(R)-2$.  The upper bound for the multiplicity is provided in \cite[Corollary 2.3]{Huneke etal aci}.
   Setting $p=5$, we will get a counter-example to the inequality $\et(R)\ge \dime(R)-\dep(R)$ with $\et(R)\le 601$. Moreover by \cite{Huneke etal aci}, $\et(R)$ cannot attain its maximum value because $R$ is not Cohen-Macaulay. So that $\et(R)\le 600$.
 \end{proof}

The structure of the annihilator of Koszul homologies is closely related to the Homological Conjectures, in particular to  the  Monomial Conjecture as one can see in the next proposition.

\begin{prop} \label{KoszulIsHard }
	Let $(R,\mam)$ be an almost complete intersection and $\mathbf{x}$ be any system of parameters  of  $R$. Then  the annihilator of the first Koszul homology with respect to $\mathbf{x}$ is not $(\mathbf{x})$  i.e.,
	$$(\mathbf{x})\subsetneq (0:_R \Ht_1(\mathbf{x};R)),$$
	if and only if the Monomial Conjecture holds. Consequently, by virtue of \cite{AndreLaConjecture}, $$((\mathbf{x}):\mam) \subseteq  (0:_R\Ht_1(\mathbf{x};R)).$$
\end{prop}
\begin{proof}
	It suffices to show that the inequality, $\ell\big(\Ht_1(\mathbf{x},R)\big)<\ell\big(R/(\mathbf{x})\big)$, in the statement of Dutta's Theorem \ref{DuttaCriterion}(ii) holds  if and only if the inclusion $\big((\mathbf{x}):\mam\big)\subseteq 0:_R\Ht_1(\mathbf{x},R)$ holds  (for an arbitrary  system of parameter $\mathbf{x}$ of an arbitrary almost complete intersection $R$).  By Lemma \ref{CanonicalModule}(i),   $\Ht_1(\mathbf{x};R)\cong \omega_{R/(\mathbf{x})}/\im (\theta)$, wherein $\theta$ is defined in the exact sequence of Lemma \ref{CanonicalModule}(i). Recall that, $\ell\big(\omega_{R/(\mathbf{x})}\big)=\ell\big(\Et_{R/(\mathbf{x})}(R/\mam)\big)=\ell\big(R/(\mathbf{x})\big)$. Therefore, in the light of the exact sequence of Lemma \ref{CanonicalModule}(i), $\ell\big(\Ht_1(\mathbf{x},R)\big)<\ell\big(R/(\mathbf{x})\big)$ is valid if and only if  $\im(\theta)\neq 0$. 
	If, $$\big((\mathbf{x}):\mam\big)\subseteq 0:_R\Ht_1(\mathbf{x},R)= 0:_R\big(\omega_{R/(\mathbf{x})}/\im(\theta)\big),$$
	then $\im(\theta)$ has to be non-zero, because it is known that canonical module of any Cohen-Macaulay  ring is always faithful, i.e. its  annihilator is the zero ideal of the ring. Hence,   $\ell\big(\Ht_1(\mathbf{x},R)\big)<\ell\big(R/(\mathbf{x})\big)$ follows from $\big((\mathbf{x}):\mam\big)\subseteq 0:_R\Ht_1(\mathbf{x},R)$. Conversely suppose that the inequality of Dutta's Theorem \ref{DuttaCriterion}(ii) holds, in other words $\im(\theta)\neq 0$. Then, as $\omega_{R/(\mathbf{x})}\cong \Et_{R/(\mathbf{x})}(R/\mathfrak{m})$,  the Matlis dual, $$\big(\Ht_1(\mathbf{x},R)^\vee\cong\big)\big(\omega_{R/(\mathbf{x})}/\im(\theta)\big)^\vee:=\homm_{R/(\mathbf{x})}\big(\omega_{R/(\mathbf{x})}/\im(\theta),E_{R/(\mathbf{x})}(R/\mam)\big),$$  is a proper ideal, say $\maa\subset R/(\mathbf{x})$, of $R/(\mathbf{x})$ which has the same annihilator as of $0:_{R/(\mathbf{x})}\Ht_1(\mathbf{x},R)$ by the property of Matlis duality. Consequently, as $\maa\subseteq \mam/(\mathbf{x})$, we have, $$\big((\mathbf{x}):\mam)/(\mathbf{x})=0:_{R/(\mathbf{x})}\big(\mam/(\mathbf{x})\big)\subseteq 0:_{R/(\mathbf{x})}\maa=0:_{R/(\mathbf{x})}\Ht_1(\mathbf{x},R),$$ as was to be proved.
\end{proof}

   From now on, we study those  almost complete intersection  rings $R$ which satisfy
     $\mathfrak{m}^2\subseteq (\mathbf{x})R$,
     for some system of parameters $\mathbf{x}:=x_1,\ldots,x_d$ of $R$ such that $x_1=p$ if, additionally, $R$ has mixed characteristic $p>0$. We  prove that the residue field of $R$ has a resolution of length $\le d+1$ by certain residual approximation complexes.  Due to the complexity of the structures, we refer to \cite{HassanzadehResidual} and \cite{HassanzadehNaelitonResidual} for  detailed explanation of the  structures of these complexes.  The motivating property to mention these complexes here is that the acyclicity of these complexes is related to the uniform annihilator of positive Koszul homologies and  to homological conjectures.

      \begin{thm}\label{HassanzadehCreativity}
      	 (\cite[ Theorem 4.4, Corollary 4.5 ]{HassanzadehNaelitonResidual}) Let $R$ be a (Noetherian) ring, and let, $\maa=(\mathbf{a})=(a_1,\ldots,a_s)$, and, $I=(\mathbf{f})=(b,a_1,\ldots,a_s)$. Then there exists a complex,
      	   $$\mathcal{Z}^+_\bullet(a_1,\ldots,a_s,b):= 0\rightarrow \mathcal{Z}^+_{s+1}\rightarrow \mathcal{Z}^+_{s}\rightarrow \cdots\rightarrow \mathcal{Z}^+_1\rightarrow R\rightarrow 0,  $$
      	   such that, $\Ht_0\big(\mathcal{Z}^+_\bullet(a_1,\ldots,a_s,b)\big)=R/(\maa:_Rb)$, and, $\mathcal{Z}^+_i=\dsum\limits_{j=i}^{s+1}Z_j(\mathbf{f})^{\oplus n_j},$ for some positive integers $n_j$, wherein, $Z_j(\mathbf{f})$, is the $j$-th module of cycles of the Koszul complex $\Kt(a_1,\cdots,a_s,b;R)$. Moreover, $\mathcal{Z}^+_\bullet(a_1,\ldots,a_s,b)$, is acyclic if and only if  $b\in \big(0:_R\Ht_i(a_1,\ldots,a_s;R)\big)$ for each natural number $i$.
      \end{thm}

       Translating the above theorem into the  setting  of its preceding paragraph, i.e. when $(R,\mam)$ is almost complete intersection and $\mathbf{x}$ is a s.o.p. of $R$ whose generated ideal contains $\mam^2$ with $x_1=p$ in the case of mixed characteristic, for each $z\in \mam\backslash (\mathbf{x})$, we obtain a complex $\mathcal{Z}^+_\bullet(\mathbf{x},z)$ consisting of Koszul cycles of the sequence $(\mathbf{x},z)$ such that $\Ht_0\big(\mathcal{Z}^+_\bullet(\mathbf{x},z)\big)=R/\mam$. In the sequel we establish the acyclicity of the this complex which provides us with a nice finite resolution of $R/\mam$. We do this, by proving that this class of almost complete intersections have multiplicity at most two, in the non-Cohen-Macaulay case\footnote{If $R$ is a Cohen-Macaulay almost complete intersection  with a s.o.p $\mathbf{x}$ whose generated ideal contains $\mam^2$, then we may have, $\et(R)=3$. For instance, let $R$ be the residue ring of, $\mathbf{Q}[X_1,X_2,X_3,X_4,X_5,X_6]$, modulo the ideal generated by size $2$ minors of the generic $2\times 3$ matrix of   indeterminates.}. So that, Lemma \ref{InequalityHolds}(ii) implies that $\dep(R)\ge d-2$. In Theorem \ref{AcyclicitySettled}, we shall see how this lower bound for the depth would imply  that the Koszul homologies of $R$ with respect to $\mathbf{x}$ are $R/\mam$-vector spaces, as required.  However, in order to accomplish this,  we  shall have need of an additional assumption on $\mathbf{x}$, i.e. $\mathbf{x}$ is, furthermore, a part of a minimal generating set for the maximal ideal of $R$. We overcome the minimality by passing to an appropriate extension which is explained in Remark \ref{SquareOfSequence}.

 \begin{rem} \label{FirstRemark}
       	Let $a\in R$. Then the free $R$-module $R\dsum R$ acquires a ring structure via the following rule,
       	$$(r,s)(r^\prime,s^\prime)=(rr^\prime+ss^\prime a,rs^\prime+r^\prime s).$$
       	We use the notation $R(a^{1/2})$ to denote the foregoing ring structure of $R\dsum R$. In fact it is easily seen that the map, $R(a^{1/2})\rightarrow R[X]/(X^2-a)$, which takes $(r,s)$ to $(sX+r)+(X^2-a)R[X]$ is an isomorphism of $R$-algebras. In particular if $R$ is an almost complete intersection then so is $R(a^{1/2})$. We are given   the  extension map $R\rightarrow R(a^{1/2})$ by the rule $r\mapsto (r,0)$ which turns $\Ra$ into a free $R$-module with the basis $\{(1,0),(0,1)\}$. Consequently this  extension is an integral extension of $R$ and it is subject to the following properties which all are easy to verify.
       	\begin{enumerate}
       		\item[(i)]  $\dime(R)=\dime\big(\Ra\big)$ and $a$ has a square root in $\Ra$, namely $(0,1)$, which we denote it by $a^{1/2}$.
       		\item[(ii)] If $a\in \mam$ then $\Ra$ is a local ring with unique maximal ideal $\mam_{\Ra}:=\mam\dsum R$.
       		\item[(iii)] \label{RootOfParameterElement} If $a,x_2,\ldots,x_d$ is a system of parameters of $R$ then $a^{1/2},x_2,\ldots,x_d$ is a system of parameters for $\Ra$.      Moreover, if $\mam^2\subseteq (a,x_2,\ldots,x_d)$, then, $$\mam_{\Ra}^2=(\mam^2+Ra)\dsum \mam\subseteq (a^{1/2},x_2,\ldots,x_d).$$
       	
       	\end{enumerate}
       	
       \end{rem}

       In the following remark, we  adjoin the square root of an arbitrary sequence of $R$, to $R$, and we obtain an  $R$-algebra  which is a finite free $R$-module and such that the  square root sequence forms a part of a minimal generating set of the maximal ideal of the new free extension. This remark  has a role in the proof of Theorem \ref{AcyclicitySettled} wherein it is applied to construct, from an almost complete intersection $(R,\mam)$ bearing a s.o.p. $\mathbf{x}$ which is not a part of a minimal generating set of $\mam$ but $\mam^2\subseteq (\mathbf{x})$,  an almost complete intersection extension $(R',\mam')$ of $(R,\mam)$  such that $R'$ admits a s.o.p. $\mathbf{x}'$ (which is the square root of $\mathbf{x}$) such that $\mathbf{x}'$ is a part of a minimal generating set of $\mam'$ and   $\mam'^2\subseteq (\mathbf{x}')$.

       \begin{rem}  \label{PromoteToMinimialBasis}
       	
       	\label{SquareOfSequence} Let, $x_1,\ldots,x_l,$ be a sequence of elements of $R$ contained in the maximal ideal of $R$.  We, inductively, construct the local ring $(R_i,\mathfrak{m}_i)$ by taking a square root of $x_{i}$ in, $R_{i-1}$, similarly as in the preceding remark. Then in, $R_l$, we have, $$x_i^{1/2}=(\underset{0\text{-th coordinate}}{\underbrace{0}},0,\ldots,0,1,0,\ldots,\underset{(2^l-1)\text{-th coordinate}}{\underbrace{0}}),$$ whose $2^{(i-1)}$-th coordinate is $1$ and others are zero.
       	
       	\item[(i)] Let $1\le j\le l$ and $0\le k\le 2^l-1$. We denote the element $(0,\ldots,0,\underset{k-\text{th coordinate}}{\underbrace{1}},0,\ldots,0)$ of $R_l$ by $e_k$. Then we have, $$e_kx_{j}^{1/2}=\begin{cases}e_{k+2^{j-1}}, & (j-1)\text{-\ th\ digit\ of\ }k\text{\ in\ base\ }2\text{ is\ 0} \\ x_je_{k-2^{j-1}}, & (j-1)\text{-\ th\ digit\ of\ }k\text{\ in\ base\ }2\text{ is\ 1}.\end{cases}$$ In order to see why this is the case we induct on the least natural number $s\ge j$ such that $k\le 2^s-1$. In the case where $s=j$ it is easily seen that the  $(j-1)$-th digit of $k$ in its $2$-th base representation is $0$ (is $1$) if and only if $k\le 2^{j-1}-1$ ($k\ge 2^{j-1}$). So an easy use of the multiplication rule of the ring $R_{j}:=R_{j-1}\bigoplus R_{j-1}$ proves the claim (Recall that $R_j$ is subring of $R_l$). Now assume that $s>j$. Then we have, $$e_kx_j^{1/2}=\Big(0,\ldots,\underset{2^{s-1}-1\text{-th\ coordinate}}{\underbrace{0}},\big(0,\ldots,0,\underset{(k-2^{s-1})\text{-th\ coordinate}}{\underbrace{1}},0,\ldots,0\big)x_j^{1/2}\Big).$$	
       	
       	Now set $k^\prime:=k-2^{s-1}$. Note that the $(j-1)$-th digit of the base $2$ representation of $k$ and $k^\prime$ are equal. Consequently the statement follows from our inductive hypothesis.
       	
       	\item[(ii)] We are going to show that for each $1\le i\le l$ the projection map $\tau_{2^{(i-1)}}:\mam_l^2\rightarrow R$, which is the projection to the $(2^{(i-1)})$-th coordinate,  is not surjective. In the case where $l=1$ we have $\mam_l^2=(\mam^2+x_1R)\bigoplus \mam$. So, we assume that $l\ge 2$ and the statement is true for smaller values of $l$. Then,
       	\begin{equation}
       	\label{Combinatorics}
       	\mam_l^2=\big(\mam_{l-1}^2+x_lR_{l-1}\big)\bigoplus \mam_{l-1}.
       	\end{equation}
       	
       	Now, if $i=l$ then $2^{l-1}$-th coordinate  of $\mam_l^2$ is just the first coordinate of, $$\mam_{l-1}=\mam\bigoplus R\bigoplus \cdots\bigoplus R.$$ Hence, clearly, $\tau_{2^{(l-1)}}$ is not surjective. On the other hand if $i\lneq l$ then by our inductive hypothesis $\tau_{2^{(i-1)}}:\mam_{l-1}^2\rightarrow R$ is not surjective which, in the light of the equality (\ref{Combinatorics}), implies the statement immediately.
       	
       	\item[(iii)] In continuation of our investigation in the previous part, we need to show, also, that the projection map $\tau_{2^{(i-1)}}:x_j^{1/2}R_{l-1}\rightarrow R$ is not surjective unless $i=j$ ($1\le i\le l-1$ and $1\le j\le l-1$). Let $(r_k)_{0\le k\le 2^{l-1}-1}\in R_{l-1}$. Then we have,
       	
       	\begin{center}
       		$(r_k)_{0\le k\le 2^{l-1}-1}x_j^{1/2}= \sum\limits_{k=0}^{2^{l-1}-1}r_ke_{k}x_j^{1/2}=\linebreak\sum\limits_{\substack{k=0\\ (j-1)-\text{th\ digit\ of\ }k\text{\ in\ base\ }2\text{\ is\ }0}}^{2^{l-1}-1}r_ke_{k+2^{j-1}}+\sum\limits_{\substack{k=0\\ (j-1)-\text{th\ digit\ of\ }k\text{\ \ in\ base\ }2\text{\ is\ }1}}^{2^{l-1}-1}r_kx_je_{k-2^{j-1}}$.
       	\end{center}
       	
       	Thus if $i<j$ then evidently $\tau_{2^{i-1}}$ is not surjective. On the other hand if $i>j$ and there exits  $0\le k\le 2^{l-1}-1$ with $k+2^{j-1}=2^{i-1}$ then $k=2^{i-1}-2^{j-1}$ which after a straightforward computation shows that the $(j-1)$-th digit of $k$ in base $2$ is $1$. This proves the non-surjectivity of  $\tau_{2^{i-1}}$.
       	
       	\item[(iv)]  By means of the arguments of the foregoing part we can, directly, conclude that, $$x^{1/2}_i\notin  \mam^2_{l-1}+(x^{1/2}_1,\ldots,\widehat{x^{1/2}_i},\ldots,x_{l-1}^{1/2},x_l)R_{l-1},\ (i\lneq l)$$ otherwise we must have  $\tau_{2^{(i-1)}}:(x^{1/2}_j)R_{l-1}\rightarrow R$ for some $1\le j\le l-1$ and $j\neq i$ or  $\tau_{2^{(i-1)}}:\mam^2_{l-1}\rightarrow R$ is surjective.
       	
       	\item[(v)] The elements $x_1^{1/2},\ldots,x_l^{1/2}$ forms a part of a minimal basis for the maximal ideal $\mam_l$ of $R_l$. Let, $(\alpha_1,\beta_1),\ldots,(\alpha_l,\beta_l)\in R_l=R_{l-1}\bigoplus R_{l-1}$ be such that, $$\sum\limits_{k=1}^{l-1}(\alpha_k,\beta_k)x_k^{1/2}+(\alpha_l,\beta_l)\underset{=x_l^{1/2}}{\underbrace{(0_{R_{l-1}},1_{R_{l-1}})}}\in \mam_l^2=\big(\mam_{l-1}^2+(x_l)R_{l-1}\big)\bigoplus \mam_{l-1}.$$
       	Then by a simple computation we get \begin{equation}
       	\label{FirstIdentity}
       	\sum\limits_{k=1}^{l-1}\alpha_kx_k^{1/2}+\beta_{l}x_l\in \mam_{l-1}^2+x_lR_{l-1},
       	\end{equation}
       	and,
       	\begin{equation}
       	\label{SecondIdentity}
       	\sum\limits_{k=1}^{l-1}\beta_kx_k^{1/2}+\alpha_l\in \mam_{l-1}.
       	\end{equation}
       	So the identity (\ref{SecondIdentity}) yields $\alpha_l\in \mam_{l-1}$  and thence $(\alpha_l,\beta_l)\in \mam_{l}$. Moreover,  for each $1\le k\le l-1$  we must have $(\alpha_k,\beta_k)\in \mam_l=\mam_{l-1}\dsum R_{l-1}$, otherwise  we get $\alpha_i\notin \mam_{l-1}$ for some $1\le i\le l-1$ which in view of the identity (\ref{FirstIdentity}) yields $x_i^{1/2}\in \mam_{l-1}^{2}+(x_1^{1/2},\ldots,\widehat{x_i^{1/2}},\ldots,x_{l-1}^{1/2},x_l)R_{l-1}$ violating  part (iv). Consequently, $(\alpha_k,\beta_k)\in \mam_l$ for each $1\le k\le l$. This implies that $x_1^{1/2}+\mam_l^2,\ldots,x_l^{1/2}+\mam_l^2$ is a linearly independent subset of $\mam_l/\mam_l^2$ over $R_l/\mam_l$ and thence $x_1^{1/2},\ldots,x_l^{1/2}$ is part of a minimal basis for $\mam_l$.
       	
       \end{rem}
  The next lemma holds for both equal characteristic and mixed characteristic cases. Here we present a proof in  mixed characteristic while the proof in    equi-characteristic zero is similar.  Furthermore, we consider the case $x_1=p^{1/2}$ as well, to  adapt the statement to the  case $2$ of the proof of Theorem \ref{AcyclicitySettled} wherein our  square root technique developed in  Remark \ref{PromoteToMinimialBasis} is applied for passing to an almost complete intersection  with the  extra assumption that $p^{1/2},x_2^{1/2},\ldots,x_d^{1/2}$ is a part of a minimal generating set of $\mam$. The proof deals only with the case where $x_1=p^{1/2}$, however, by the same proof, the lemma holds if $x_1=p$.
  \begin{lem}\label{lem edim-dim=2} Suppose that $R$ is an almost complete intersection and $\mathbf{x}$ is a s.o.p. of $R$ such that $\mam^2\subseteq (\mathbf{x})$, and $x_1=p^{1/2}$ or $x_1=p$ if $R$ has mixed characteristic. Assume in addition that the system of parameters $\mathbf{x}$   is a part of minimal generating set of $\mathfrak{m}$.  Then $\text{embdim}(R)-\dime(R)\le 2$.
  \end{lem}
  \begin{proof}
  	  Without loss of generality, we can assume that $R$ is complete. Let us use a presentation $p^{1/2},X_2,\ldots,X_d$ for the system of parameters
  $\mathbf{x}$ in $R$, where $R=(V,p^{1/2})[[X_2,\ldots,X_d,Z_1,\ldots,Z_u]]/I$ is a homomorphic image of the regular local ring
  $$(A,\man)=(V,p^{1/2})[[X_2,\ldots,X_d,Z_1,\ldots,Z_u]]$$ with $d+u=\text{embdim}(A)=\text{embdim}(R)$. So that  $\man^2\subseteq (p^{1/2},X_2,\ldots,X_d)+I$. Set  $I=(f_1,\ldots,f_l)$, wherein $l=\mu(I)$. We denote by $f^X_i$ the sum of those monomials of $f_i$ whose power of $X_j$ is non-zero for some $2\le j\le d$. Subsequently,   we set  $f^{p^{1/2}}_i$ to be the sum of those monomials of $f_i-f^X_i$ whose coefficients are multiple of $p^{1/2}$. It follows that, $f_i^Z:=f_i-f^X_i-f_i^{p^{1/2}}\in V[[Z_1,\ldots,Z_u]]$, and that the coefficients of the monomials of $f_i^Z$ are all invertible. Now, set  ${^{\ge 3}f_i^{\ Z}}$ (resp., ${^{\le  2}f_i^{\ Z}}$) to be the sum of  the monomials of $f_i^Z$ of total degree greater than or equal to $3$ (resp., less than or equal to $2$). Since $I\subseteq \man^2$,   it turns out that,  ${^{\le 2}f^{\ Z}_i}$, is an $V$-linear combination of the elements of the form $\{Z_iZ_j:1\le i,j\le u\}$ with invertible coefficients in $V$. Otherwise, ${^{\le 2}f^{\ Z}_i}$ and thence $f_i$, would have  a summand of the form $kZ_j^\alpha$ where, $\alpha\in \{0,1\}$, $1\le j\le u$ and $k\in V\backslash p^{1/2}V$. But this contradicts with $f_i\in \man^2$. In particular, ${^{\le 2}f^{\ Z}_i}\in (Z_1,\ldots,Z_u)^2$.
      	  	
    On the other hand the fact that, $Z_iZ_j\in (p^{1/2},X_2,\ldots,X_d)+(f_1,\ldots,f_l)$, yields $$Z_kZ_s=g_1p^{1/2}+\sum\limits_{i=2}^dg_iX_i+\sum\limits_{i=1}^lh_i\ {^{\ge 3}f_i^{\ Z}}+\sum\limits_{i=1}^lh_i\ {^{\le 2}f_i^{\ Z}},$$
    for each $1\le k,s\le u$ and some power series  $h_i,g_i$. Thus an elementary  computation
    shows that, $(Z_{1},\ldots,Z_{u})^{2}\subseteq(^{\le2}f_{1}^{Z},\ldots,^{\le2}f_{l}^{Z},p^{1/2})$.
    To be more precise, noticing that each monomial appearing in $^{\le2}f_{i}^{Z}$
    is of  degree $2$ with indeterminates in  $Z_{1},\ldots,Z_{u}$, and letting
    $^{2}g_{1}^{Z}$ (resp., $h_{i,0}$) be the sum of the degree
    $2$ monomials of $g_{1}$ of the form $vZ_{m}Z_{l}$ (resp., 
    be the constant coefficient of $h_{i}$), and considering the identity,
    \begin{center} $Z_{k}Z_{s}=\underset{\text{LHS}}{\underbrace{p^{1/2}\ ^{2}g_{1}^{Z}+\sum\limits_{i=1}^l h_{i,0}\ ^{\le2}f_{i}^{Z}}}+\underset{\text{RHS}}{\underbrace{\big((p^{1/2}g_{1}-p^{1/2}\ ^{2}g_{1}^{Z})+(\sum\limits_{i=1}^l h_{i}\ ^{\le2}f_{i}^{Z}-\sum\limits_{i=1}^l h_{i,0}\ ^{\le2}f_{i,}^{Z})+\sum\limits_{i=1}^l h_{i}\ ^{\ge3}f_{i}^{Z}+\sum\limits_{i=2}^d g_{i}X_{i}\big)}}$
    \end{center} 
    it is easily seen that the RHS vanishes, i.e.
    $Z_{k}Z_{s}=p^{1/2}\ ^{2}g_{1}^{Z}+\sum\limits_{i=1}^l h_{i,0}\ ^{\le2}f_{i}^{Z}$
    concluding  that $Z_{k}Z_{s}\in({}^{\le2}f_{1}^{Z},\cdots,{}^{\le2}f_{l}^{Z},p^{1/2})$ as asserted.  This in conjunction with the concluding assertion of the preceding paragraph yields $(Z_1,\ldots,Z_u)^2+(p^{1/2})=({^{\le 2}f_1^{\ Z}},\ldots,{^{\le 2}f_l^{\ Z}},p^{1/2})$.

    Since $R$ is an almost complete intersection,  we have, $$l=\mu(I)=\hit(I)+1=\dime(A)-\dime(R)+1=\text{embdim}(R)-\dime(R)+1=u+1.$$  Consequently, we get $$u+2=l+1\ge\mu\Big(\sum\limits_{i=1}^{l}({^{\le 2}f_i^{\ Z}})+(p^{1/2})\Big)=\mu\big((Z_1,\ldots,Z_u)^2+(p^{1/2})\big)=\Big(\big(u(u+1)\big)/2\Big)+1,$$ therefore  $u\le 2$ as desired.
  \end{proof}

\begin{prop}\label{aci has e<=2} Suppose that $R$ is an almost complete intersection and $\mathbf{x}$ is a s.o.p. for $R$ such that $\mam^2\subseteq (\mathbf{x})$, and $x_1=p$ or $x_1=p^{1/2}$ if $R$ has mixed characteristic. Suppose, moreover, that $\mathbf{x}$ is a part of minimal generating set of $\mathfrak{m}$. Then either $R$ is Cohen-Macaulay with $\et(\mathbf{x},R)= 3$, or else  $\et(\mathbf{x},R)\le 2$.
\end{prop}
\begin{proof} Without loss of generality, we can assume that $R$ is complete. Consider a minimal Cohen-presentation of $R$, $R=A/I$. So that, as in the proof of the previous lemma, $(A,\mathfrak{n})$ is the complete regular local
	ring $(V,\pi)[[X_{2},\ldots,X_{d},Z_{1},\ldots,Z_{u}]]$ where $\pi=p^{1/2}$
	if $x_{1}=p^{1/2}$ or $\pi=p$ otherwise,   $\text{embdim}\big((A,\man)\big)=\text{embdim}\big((R,\mathfrak{m})\big)$ and $I\subseteq \man^2$. Firstly, we have
      	  	  $$ \ell\big(R/(\mathbf{x})\big) =\ell\bigg(A/\big((\mathbf{x})+I)\big)\bigg)=\ell(\overline{A}/\overline{I})$$ wherein the notation, $\overline{\ \ }$, means modulo $(\mathbf{x})$. Secondly, $\overline{I}=\overline{\man^2},$ since $\man^2\subseteq I+(\mathbf{x})$ and $I\subseteq \man^2$ simultaneously. It turns out that,
      	  	
      	  	     $$ \ell\big(R/(\mathbf{x})\big) = \ell (\overline{A}/\overline{\man}^2)=\text{embdim}(\overline{A})+1=\text{embdim}(R)-\dime(R)+1.$$
      	  	   By Lemma \ref{lem edim-dim=2}, $\text{embdim}(R)-\dime(R)\le 2$ hence the above equality implies that $\ell\big(R/(\mathbf{x})\big)\le 3$.    So that $\et(\mathbf{x},R)\le 3$ (e.g. \cite[Theorem 14.10]{MatsumuraCommutative}). But if $\et(\mathbf{x},R)= 3$ then  $R$ is Cohen-Macaulay by \cite[Theorem 17.11]{MatsumuraCommutative}. Thus we have $\et(\mathbf{x},R)\le 2$ if $R$ is not Cohen-Macaulay.
\end{proof}
      The following lemma which will be used in the proof of Theorem \ref{AcyclicitySettled} follows from the Hilbert-Burch Theorem.

      \begin{lem}
      	\label{HilbertBurch} Suppose that $A$ is a regular local ring and $\maa$ is an ideal of $A$ of codimension $1$ minimally generated by $2$ elements. Then, $\dep(A/\maa)=\dim(A/\mathfrak{a})-1$.
      	  \begin{proof}
      	  	 Firstly, in view of, exercise \cite[2.2.28]{BrunsHerzogCohenMacaulay}, $A/\maa$ is not Cohen-Macaulay.  Set, $I:=(x_1,x_2)$. As, $\maa$ has codimension $1$ and $A$ is a unique factorization domain so there exists an  element $\alpha\in A$ such that $x_1=\alpha x'_1$ and $x_2=\alpha x'_2$ for some coprime elements $x'_1$ and $x'_2$ of $A$. In particular, $x'_1,x'_2$ is a regular sequence of $A$. Now, it is easily seen that the Hilbert-Burch  resolution, $0\rightarrow A\overset{\begin{bmatrix}
      	  	 	-x_2'& x_1'
      	  	 	\end{bmatrix}}{\longrightarrow} A^2\overset{\begin{bmatrix}
      	  	 	x_1\\ x_2
      	  	 	\end{bmatrix}}{\rightarrow}A\rightarrow 0$,  resolves $A/\maa$ and thus the statement follows from the Auslander-Buchsbaum formula.
      	  \end{proof}
      \end{lem}
      We are now ready  to state the main result of the paper. We use the terminology ``resolves $R/\mathfrak{m}$" in the next theorem to emphasize that the Hassanzadeh's residual approximation complex provides, in the situation of the subsequent theorem as well as for Cohen-Macaulay rings, or potentially in the context of the Question 1.2 if it has positive answer,  a finite acyclic  complex  whose only non-trivial homology is $R/\mathfrak{m}$. Since the residue field has infinite projective dimension unless the ambient ring is regular,  so the residual approximation complexes can be considered, occasionally, as a candidate to remedy the lack of the existence of a nice bounded acyclic complex whose $0$-th homology is $R/\mathfrak{m}$. 

      \begin{thm}\label{AcyclicitySettled}
      	Suppose that $R$ is an almost complete intersection and $\mathbf{x}$ is a s.o.p. for $R$ such that $\mam^2\subseteq (\mathbf{x})$, and $x_1=p$ if $R$ has mixed characteristic. Then for each $i\ge 1$,  $\mam\big(\Ht_i(\mathbf{x},R)\big)=0$. Consequently, for each $z\in\mam\backslash (\mathbf{x})$, the residual approximation complex $\mathcal{Z}^+_\bullet(\mathbf{x},z)$ is an acyclic complex of length $\le d+1$ \footnote{If $\depth(R)=0$ then the length of the residual approximation  complex is $d+1$, otherwise the length is $d$.} which resolves $R/ \mam$.
      	  \begin{proof}
      	  	If $R$ is a Cohen-Macaulay then there is nothing to prove, because then $\Ht_i(\mathbf{x},R)=0$ for each $i\ge 1$ and therefore the statement is obvious by virtue of Theorem \ref{HassanzadehCreativity}. So assume that $R$ is not Cohen-Macaulay (and is complete).  We separate two cases.
		
		{\bf Case 1.} If  $\mathbf{x}$ is a part of minimal generating set of $\mathfrak{m}$, then  according to Proposition \ref{aci has e<=2}, $\et(\mathbf{x},R)\le 2$.  Now if $R$ has equal characteristic then   we have $\dep(R)\ge \dime(R)-2$, by Lemma \ref{InequalityHolds}(ii). On the other hand if $R$ has mixed characteristic $p>0$ then in view of  our assumption of $x_1=p$, we are provided with a Noether normalization $A\rightarrow R$ wherein $A$ is a regular local ring with the regular system of parameters $\mathbf{x}$ (see, Remark \ref{FabulousTheorems}(vi)). Consequently, the proof of  Lemma \ref{InequalityHolds}(ii)  can  be copied verbatim to obtain the inequality, $\dep(R)\ge \dime(R)-2$.
       Now if $\dep(R)=\dime(R)-1$ then $\Ht_i(\mathbf{x},R)=0$ for each $i\ge 2$ and the statement follows from Proposition \ref{KoszulIsHard } and Theorem \ref{HassanzadehCreativity} and the discussion there; notice that $\mam\subseteq ((\mathbf{x}):\mam)$ by assumption, thus Proposition \ref{KoszulIsHard } yields $\mam\big(\Ht_1(\mathbf{x},R)\big)=0$ by which	  we can apply Theorem \ref{HassanzadehCreativity} to deduce the acyclicity of the mentioned complexes of the statement.

       So we deal with case where $\dep(R)=\dime(R)-2$. In particular, $\et(\mathbf{x},R)=2$ and $\ell\big(R/(\mathbf{x})\big)=3$ by Proposition \ref{aci has e<=2} (the length of $R/(\mathbf{x})$ is calculated in the proof of Proposition \ref{aci has e<=2}).   In this case    $\Ht_2(\mathbf{x},R)\neq 0$, hence $\ell\big(\Ht_2(\mathbf{x},R)\big)\ge 1$. As well, $\mam\Ht_1(\mathbf{x},R)=0$ by Proposition \ref{KoszulIsHard } and the assumption that $\mathfrak{m}\subseteq ((\mathbf{x}):\mam)$.  On the other hand, the  Monomial Conjecture, which is a Theorem by \cite{AndreLaConjecture} and Dutta's Theorem \ref{DuttaCriterion}(ii) imply that $\ell\big(R/(\mathbf{x})\big)-\ell\big(\Ht_1(\mathbf{x},R)\big)\ge 1$. By Serre's formula, Remark \ref{FabulousTheorems}(iv), we have
        $$\et(\mathbf{x},R)=\ell\big(R/(\mathbf{x})\big)-\ell\big(\Ht_1(\mathbf{x},R)\big)+\ell\big(\Ht_2(\mathbf{x},R)\big).$$

        Since $\et(\mathbf{x},R)=2$, we get $\ell\big(\Ht_2(\mathbf{x},R)\big)=1$. In particular $\Ht_2(\mathbf{x},R)\simeq R/\mam$; so that $\mam\Ht_2(\mathbf{x},R)=0$ as desired.

{\bf Case 2.} If  $\mathbf{x}$ is not a part of minimal generating set of $\mathfrak{m}$. In this case one may use Remark \ref{PromoteToMinimialBasis} to find  a ring $R'$ which  is module  finite and free almost complete intersection extension of $R$ bearing a s.o.p. $\mathbf{x}'$ which  is a part of a minimal basis of its maximal ideal, and $\mathbf{x}'R'$ properly contains the ideal  $\mathbf{x}R'$.  In this case $\text{embdim}(R)-\dime(R)\le \text{embdim}(R')-\dime(R')$ (This inequality holds for any flat local homomorphism, see \cite[Theorem 3.4]{MaLech}). The latter is at most $2$ according to  Lemma \ref{lem edim-dim=2}, because $\mathbf{x'}$ also contains the square  of the unique maximal ideal of $R'$ by Remark \ref{RootOfParameterElement}(iii). Since by assumption $\mathbf{x}$ is not a part of minimal generating set of $\mathfrak{m}$ we must have $\text{embdim}(R)-\dime(R)=1$.  Namely, without loss of generality we can assume that,  
\begin{equation}
 \label{Membership}
 x_d+\mam^2\in \big((x_1,\ldots,x_{d-1})+\mam^2\big)/\mam^2.
 \end{equation}
  Bearing in mind  the  construction method of $R'$ from $R$ in Remark \ref{PromoteToMinimialBasis} and following the notation therein, we have $R'=R_{d-1}(x_d^{1/2})$ and the second power of the unique maximal ideal, $\mathfrak{m}':=\mathfrak{m}_{R_{d-1}}\dsum R_{d-1}$, of $R'$, is just \begin{equation}
    \label{SecondPowerIdentity}
    \mam'^2=(\mam_{R_{d-1}}^2+x_dR_{d-1})\oplus \mam_{R_{d-1}}.
   \end{equation} 
     Since any element of the sequence $x_1,\ldots,x_{d-1}$ has a square root in $R_{d-1}$ and whence belong to $\mam_{R_{d-1}}^2$, so in view of the identity (\ref{Membership}) and (\ref{SecondPowerIdentity}), it turns out  that, $\mam'^2=\mam_{R_{d-1}}^2\oplus \mam_{R_{d-1}}$. Consequently, $$\mathfrak{m}'/\mathfrak{m}'^{2}=(\mathfrak{m}_{R_{d-1}}\oplus R_{d-1})/(\mathfrak{m}_{R_{d-1}}^{2}\oplus\mathfrak{m}_{R_{d-1}})=(\mathfrak{\mathfrak{m}}_{R_{d-1}}/\mathfrak{m}_{R_{d-1}}^{2})\oplus(R_{d-1}/\mathfrak{m}_{R_{d-1}}),$$ which implies that $\text{embdim}(R')=\text{embdim}(R_{d-1})+1$ (notice that $R_{d-1}\rightarrow R'$ induces an isomorphism on residue fields $R_{d-1}/\mam_{d-1}\rightarrow R'/\mam'$). Hence, as $\dim(R)=\dim(R_{d-1})=\dim(R')$, we get, $$2\ge \text{embdim}(R')-\dim(R')=\text{embdim}(R_{d-1})-\dim(R_{d-1})+1\ge \text{embdim}(R)-\dim(R)+1,$$
  i.e. $\text{embdim}(R)-\dim(R)=1$, as  asserted.
  
  However, by assumption, $R$ is an almost complete intersection which  is a quotient of a regular local ring $A$ by an almost complete intersection ideal $\mathbf{a}$.  Therefore $\text{embdim}(R)-\dime(R)=1$ shows that $\mathfrak{a}$ is a two generated ideal of height $1$, which by Lemma \ref{HilbertBurch} is resolved by a Hilbert-Burch  matrix and  $\dep(R)=\dime(R)-1$. Thus the result follows from Proposition \ref{KoszulIsHard }.
      	  \end{proof}
      \end{thm}

We  present an explicit non-Cohen-Macaulay example of the class of 	 rings of the previous theorem. 
      \begin{exam}
      	Set $ R=K[[Y_1,\ldots,Y_6,Z_1,Z_2]]/I,$
      	with
      	$ I=(Y_2^6Y_3^5+Z_2^2,Y_3^3Y_4^8+Z_1^2,Y_2^3Y_3^4Y_4^4+Z_1Z_2).$
      	Then  $\dep(R)=4$  and $\dime(R)=6$.
      \end{exam}
We give an example to show that, in general, the inclusion $\mathfrak{m}^2\subseteq (\mathbf{x})$ in conjunction with the non-Cohen-Macaulayness does not imply that $\et(\mathbf{x},R)\le 2$ (without assuming that $R$ 	is an almost complete intersection).
      \begin{exam} Set
      	  $ S=\mathbf{Q}[X_1,\ldots,X_4,Z_1,Z_2,Z_3],$ wherein $X_1,X_2,X_3$ have degree $1$ and $X_4,Z_1,Z_2,Z_3$ have degree $2$. Let
      	    $I:=(Z_1^2+X_3^2Z_2+X_4Z_2,Z_2^2-X_1X_2Z_3+X_4Z_3,Z_3^2,Z_1Z_2,Z_1Z_3,Z_2Z_3).$
      	  Then, in $R=S/I$ the image of the sequence $\mathbf{x}:=X_1,X_2,X_3,X_4$ forms a system of parameters  whose generated ideal contains the second power of the maximal ideal, while $\et(\mathbf{x},R)=3$. Note that $\dep(R)=3$  and $R$ is not Cohen-Macaulay.
      \end{exam}

      Inspired by Theorem \ref{AcyclicitySettled} in conjunction with Proposition \ref{KoszulIsHard }, we propose the following question, which is a generalization of the Monomial Conjecture.

      \begin{ques}\label{TheEnd} Let $R$ be an arbitrary almost complete intersection. Let, $\mathbf{x}$, be a system of parameters of $R$ and suppose that $z$ is element of $R$ whose image in $R/(\mathbf{x})$ is a non-zero element of $\text{Soc}\big(R/(\mathbf{x})\big)$. Thus we have $(\mathbf{x}):z=\mam$,  whereby, in the light of Theorem \ref{HassanzadehCreativity}, we are given  the complex, $\mathcal{Z}^+_\bullet\big(\mathbf{x},z\big)$, with, $\Ht_0\Big(\mathcal{Z}^+_\bullet\big(\mathbf{x},z\big)\Big)=R/\mam$. The question is  whether $\mathcal{Z}^+_\bullet\big(\mathbf{x},z\big)$ resolves $R/\mam$; in other words, $z\big(\Ht_i(\mathbf{x},R)\big)=0$ for each $i\ge 1$?
      \end{ques}

     \section{Small Cohen-Macaulay Conjecture}     
 
      In \cite{TavanfarReduction}, we  proved that the Small Cohen-Macaulay Conjecture reduces to excellent unique factorization domains. In order to emphasize the significance of almost complete intersections, we  end the paper by  showing that, similarly as  the Monomial Conjecture (Theorem now), the Small Cohen-Macaulay Conjecture  also reduces to  almost complete intersections. But first, we review the statement of the Small-Cohen-Macaulay Conjecture. A maximal Cohen-Macaulay $R$-module is a non-zero finitely generated $R$-module whose depth attains the largest possible  number, i.e. $\dim(R)$, in other words, a maximal Cohen-Macaulay module is a (finitely generated) Cohen-Macaulay module of dimension $\dim R$. The existence of maximal Cohen-Macaulay modules over a complete local ring (or an excellent local ring) is a long standing conjecture, so-called the Small Cohen-Macaulay Conjecture. It is conjectured by Hochster in \cite[Conjecture (6), page 10]{HochsterTopics} in the early 1970s. However, Hochster later in the 2000s, proposed a conjecture in the reverse direction, i.e. there exists a complete local ring which does not admit a maximal Cohen-Macaulay module (see, \cite[Conjecture 2]{HochsterHomological}).
      
      \begin{prop}\label{AlmostSmall}
      	The Small Cohen-Macaulay Conjecture is valid if every complete almost complete intersection has a maximal Cohen-Macaulay module.	
      \end{prop}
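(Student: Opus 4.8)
The plan is to reduce the Small Cohen--Macaulay Conjecture to the class of almost complete intersections by a finite‑extension (restriction‑of‑scalars) argument. First I would record the transfer principle on which everything rests: if $\varphi\colon R\to T$ is a module‑finite \emph{local} homomorphism with $\dim T=\dim R$, and $N$ is a maximal Cohen--Macaulay $T$‑module, then $N$, viewed as an $R$‑module through $\varphi$, is maximal Cohen--Macaulay over $R$. Indeed, $\mam_R T$ is $\mam_T$‑primary, so local cohomology with respect to the two maximal ideals agrees and $\dep_R N=\dep_T N$, while $\dim_R N=\dim_T N$ because support dimension is preserved by a finite morphism; hence $\dep_R N=\dim T=\dim R$, which together with $\dep_R N\le \dim_R N\le \dim T$ forces $N$ to be maximal Cohen--Macaulay over $R$. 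Next I would reduce to complete local \emph{domains}: for a complete local ring $R$ of dimension $d$ pick a minimal prime $\mathfrak p$ with $\dim R/\mathfrak p=d$; any maximal Cohen--Macaulay $R/\mathfrak p$‑module has depth and dimension $d$ over $R$ as well, so it suffices to treat $R/\mathfrak p$. Thus everything comes down to producing, for each complete local domain $R$ of dimension $d$, a module‑finite local extension $R\hookrightarrow T$ with $\dim T=\dim R$ and $T$ a complete almost complete intersection; the hypothesis then furnishes a maximal Cohen--Macaulay $T$‑module, and the transfer principle delivers one over $R$.

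To build such an extension I would fix a Cohen presentation $R=S/P$ with $S$ complete regular local and $P$ prime of height $h$. It is worth isolating where the hypothesis is really needed: if the normalization $\overline{R}$ happens to be Cohen--Macaulay, then $\overline{R}$ is already a maximal Cohen--Macaulay $R$‑module and no hypothesis is used, so the genuine content lies in the case $\dep(\overline{R})<\dim(\overline{R})$. Constructing $T$ amounts to manufacturing a finite equidimensional extension of $R$ whose defining ideal is minimally generated by exactly $\operatorname{ht}+1$ elements. The mechanism I would exploit is that adjoining to $R$ a well‑chosen element integral over $R$ can make a generator of $P$ redundant in the defining ideal of the larger ring: if the new variable $\theta$ is forced to satisfy a relation of the shape $\theta f_i\equiv f_j$, then $f_j$ is absorbed, and the excess $\mu(P)-h$ drops. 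I would therefore adjoin a minimal generic family of such integral elements, absorbing all but one of the superfluous generators, in the spirit of Dutta's reduction of the Monomial Conjecture to almost complete intersections, so as to reach $T=S[\underline y]/J$ with $\operatorname{ht}(J)=h+\#\underline y$ and $\mu(J)=\operatorname{ht}(J)+1$.

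The hard part, which I expect to be the crux of the entire proposition, is exactly this absorption step: one must show that the excess number of generators can always be reduced to precisely one by a finite extension, \emph{without} overshooting to a complete intersection (which would be too strong, as it would settle the conjecture unconditionally) and while preserving module‑finiteness, locality, and the height bookkeeping $\dim T=\dim R$. A disciplined way to organize it is to reduce one generator at a time, at each stage passing to a generic finite cover that kills one superfluous generator and controlling the height by a prime‑avoidance/genericity argument; linkage and the residual‑intersection calculus developed later in the paper are the natural tools for certifying that the resulting ideal is an almost complete intersection of the correct height. Once $T$ is in hand the conclusion is immediate: $T$ is complete, being module‑finite over the complete ring $R$; the hypothesis provides a maximal Cohen--Macaulay $T$‑module $N$; and by the transfer principle $N$ is maximal Cohen--Macaulay over $R$, which is precisely the assertion of the Small Cohen--Macaulay Conjecture.
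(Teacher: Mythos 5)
Your transfer principle (restriction of scalars along a module-finite local map preserves maximal Cohen--Macaulayness when dimensions agree) and your reduction to complete local domains are both correct, but the entire weight of the argument then rests on the step you yourself flag as the crux: producing, for every complete local domain $R$, a module-finite local extension $R\hookrightarrow T$ with $\dim T=\dim R$ such that $T$ is an almost complete intersection. This step is not proved in your proposal, and no construction of this kind is known; the ``absorption'' sketch (adjoining $\theta$ with $\theta f_i\equiv f_j$ to make a generator redundant) gives no control over the defining ideal of the larger ring, and in particular it is not ``in the spirit of Dutta's reduction''. Dutta's construction goes in the \emph{opposite} direction: starting from a Cohen presentation $R=A/P$ with $h=\operatorname{ht}(P)$, one picks a maximal $A$-regular sequence $\mathbf{f}\subseteq P$ and a sufficiently generic $x\in P$ with $(\mathbf{f}):_A x=(\mathbf{f}):_A P$, and sets $S=A/(\mathbf{f},x)$. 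This $S$ is an almost complete intersection which surjects \emph{onto} $R$ (since $(\mathbf{f},x)\subseteq P$) and satisfies $\omega_S\cong\big((\mathbf{f}):_AP\big)/(\mathbf{f})\cong\omega_R$; it is not an extension of $R$, so your restriction-of-scalars principle cannot be applied to it --- when the ring map goes $S\twoheadrightarrow R$, an $S$-module carries no natural $R$-structure.

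This is exactly why the paper's proof uses a different transfer mechanism. After reducing (as is well known) to normal complete domains, it invokes Dutta's argument to get the almost complete intersection $S$ above, with $R$ and $S$ both homomorphic images of the same regular local ring $A$ and $\omega_S\cong\omega_R$. Given a maximal Cohen--Macaulay $S$-module $M$, the module $\Hom_S(M,\omega_S)$ is again maximal Cohen--Macaulay over $S$ (Schenzel, or Tavanfar--Tousi), and since $\omega_S$ carries an $R$-module structure (it is the canonical module of $R$, in particular annihilated by the kernel of $S\twoheadrightarrow R$), so does $\Hom_S(M,\omega_S)$; having depth equal to $\dim S=\dim R$, it is then a maximal Cohen--Macaulay $R$-module. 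Thus the duality $\Hom(-,\omega)$ replaces your restriction of scalars, and the existence problem you could not solve (a finite ACI \emph{extension} of $R$) is replaced by one that linkage solves for free (a finite ACI \emph{cover} of $R$ sharing its canonical module). Incidentally, your worry about ``overshooting to a complete intersection'' is vacuous: a complete intersection is Cohen--Macaulay, hence is itself a maximal Cohen--Macaulay module over itself, and poses no obstruction whatsoever.
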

      \begin{proof}
      	It is well-known that the conjecture reduces to normal complete domains. So let $R$ be a normal complete local domain. Then the same argument as in \cite[1.2. Proposition]{DuttaTheMonomial} shows that there exists a complete almost complete intersection $S$ such that both of $R$ and $S$ are homomorphic image of a regular local ring $A$ and they have the same canonical module. Now, if $M$ is a maximal Cohen-Macaulay $S$-module then by \cite[Corollary 1.15.]{SchenzelOnTheUse} or \cite[Theorem 2.9.(ii)]{TavanfarTousiAStudy}\footnote{see, \cite{TavanfarTousiAStudyPublished} for the published version of the paper.}, so is the canonical module of $M$, i.e., $\omega_M:=\homm_S(M,\omega_S)=\homm_A(M,\omega_S)$. But the latter is then a maximal Cohen-Macaulay $R$-module, as $\omega_S$ is, also, the canonical module of $R$.
      \end{proof}

  \section*{Acknowledgement}
        
  A special thanks  goes to S. Hamid Hassanzadeh   for  Proposition \ref{HassanzadehExample} as well as Lemma \ref{HilbertBurch} which are due to him,  for his kind consultation which led to  the correct statement of Proposition \ref{aci has e<=2}, for teaching us about his great joint paper \cite{HassanzadehNaelitonResidual}  and finally for his efforts to improve the presentation of  earlier drafts of the paper. We also  thank Massoud Tousi,  Anurag K. Singh, Linquan Ma, Kazuma Shimomoto, Kamran Divaani-Aazar and Jason McCullough for their helpful comments and valuable suggestions. We are also grateful to the referee for his/her many valuable comments, suggestions and corrections which improved the presentation of the paper.   Part of this work was done when the author was visiting the department of mathematics of the University of Utah.  We also would like to thank the University of Utah for its hospitality during our visit in the academic year 2015-2016 as well as for its support for this research.




\begin{thebibliography}{A}




\bibitem{AndreLaConjecture} Y.  Andre, \textit{The direct factor conjecture. (La conjecture du facteur direct.)}, Publ. Math., Inst. Hautes Étud. Sci. 127 (2018), 71--93.

\bibitem{AoyamaSomeBasic} Y. Aoyama, \textit{Some basic results on canonical modules},
 J. Math. Kyoto Univ. 23 (1983), 85--94.

\bibitem{AoyamaGotoOnTheEndomorphism} Y. Aoyama and S. Goto, \textit{On the endomorphism ring of the canonical module},
 J. Math. Kyoto Univ. 25 (1985), 21--30.

\bibitem{Highprojdim}J. Beder, J. McCullough, L. Nunez-Betancourt, A. Seceleanu, B. Snapp, B. Stone, \textit{
Ideals with larger projective dimension and regularity},
J. Symbolic Comput. 46 (2011), no. 10, 1105--1113.

\bibitem{BrunsHerzogCohenMacaulay} W. Bruns, J. Herzog, \textit{Cohen-Macaulay rings}, Cambridge, 1998.




\bibitem{DuttaTheMonomial} S.P. Dutta, \textit{The monomial conjecture and order ideals},
J. Algebra 383, (2013), 232--241.







\bibitem{HassanzadehResidual} S. H. Hassanzadeh, \textit{Cohen-Macaulay residual intersections and their Castelnuovo-Mumford
regularity}, Trans. Am. Math. Soc. 364 (2012), no. 12, 6371-6394.

\bibitem{HassanzadehNaelitonResidual} S. H. Hassanzade and J. Naeliton, \textit{Residual Intersections and the Annihilator of Koszul Homologies}, Algebra Number Theory 10 (2016), no. 4, 737-770.

\bibitem{HermannIkedaEquimultiplicity} M. Herrmann, S. Ikeda and U. Orbanz, \textit{Equimultiplicity and blowing up}, Springer-Verlag, Berlin, 1988. An algebraic study, With an appendix by B. Moonen.



\bibitem{HerzogSimisKoszul} J. Herzog, A. Simis and W. V. Vasconcelos, \textit{ Koszul homology and blowing-up rings}, Commutative algebra, Proc. Conf., Trento/Italy 1981, Lect. Notes Pure Appl. Math. 84, 79-169.



\bibitem{HochsterHomological} M. Hochster, \textit{Homological conjectures and Lim Cohen-Macaulay sequences},  Homological and computational methods in commutative algebra. Dedicated to Winfried Bruns on the occasion of his 70th birthday. Proceedings of the INdAM conference, Cortona, Italy, May 30 -- June 3, 2016, (2017), 173-197. 

\bibitem{HochsterTopics} M. Hochster, \textit{Topics in the homological theory of modules over commutative rings}, In: Expository Lectures from the CBMS Regional Conference Held at
the University of Nebraska, Lincoln, Neb., June 24?28, 1974, CBMS Regional
Conference Series in Mathematics, No. 24. Published for the Conference Board of
the Mathematical Sciences by the American Mathematical Society, Providence,
RI, (1975).


 \bibitem{HochsterMcLaughlinSplitting} M. Hochster and J. E. McLaughlin, \textit{Splitting theorems for quadratic ring extensions}, Illinois J. Math. 27 (1983), no. 1, 94--103.


\bibitem{HunekeARemark} C. Huneke,\textit{ A remark concerning multiplicities}, Proc. Am. Math. Soc. 85 (1982), 331--332.




\bibitem{Huneke etal aci} C.Huneke, P. Mantero, J. McCullough, A. Seceleanu, \textit{ A multiplicity bound for graded rings and a criterion for the Cohen-Macaulay property}, Proc. Amer. Math. Soc. 143 (2015), no. 6, 2365--2377.





\bibitem{KaplanskyCommutative} I. Kaplansky, \textit{ Commutative rings} 2nd revised ed. (1974) (English).

\bibitem{KhouryOnTheProjective} S. E. Khoury, \textit{On the projective dimension of $5$ quadric almost complete intersections with low multiplicities}, To appear in Rocky Mountain J. Math. https://projecteuclid.org/euclid.rmjm/1552186849. 


\bibitem{MaLech} L. Ma, \textit{Lech's conjecture in dimension three}, Adv. Math. 322 (2017), 940--970.

\bibitem{MatsumuraCommutative}H. Matsumura, \textit{Commutative ring theory}, Second, Cambridge Studies in Advanced Mathematics,
vol. 8, Cambridge University Press, Cambridge, 1989. Translated from the Japanese by M. Reid.


\bibitem{OcarrollOnATheorem} L. O'Carroll, \textit{On a theorem of Huneke concerning multiplicities}, Proc. Am. Math. Soc. 99 (1987), 25--28.


\bibitem{Rotman} J. J. Rotman, \textit{An introduction to homological algebra}, Second, Universitext, Springer, New York, 2009.

\bibitem{SchenzelOnTheUse} Peter Schenzel, \textit{On the use of local cohomology in algebra and geometry}, Basel: Birkhauser, 1998.

\bibitem{SerreLocalAlgebra} J. -P. Serre, \textit{Local algebra}, Springer Monographs in Mathematics, Springer-Verlag, Berlin, 2000.
Translated from the French by CheeWhye Chin and revised by the author.

 
\bibitem{TavanfarReduction} E. Tavanfar \textit{Reduction of the Small Cohen-Macaulay Conjecture to excellent unique factorization domains}, Arch. Math. 109, (2017), no. 5, 429--439.

\bibitem{TavanfarTousiAStudy} E. Tavanfar and M. Tousi, \textit{A Study of Quasi-Gorenstein Rings}, arXiv:1508.04597 [math.AC].

\bibitem{TavanfarTousiAStudyPublished} E. Tavanfar and M. Tousi, \textit{A Study of Quasi-Gorenstein Rings}, J. Pure Appl. Algebra 222, (2018), no. 11, 3745-3756
\end{thebibliography}
\end{document}